\definecolor{bblue}{rgb}{0.0, 0.0, 0.65}
\newtheorem{theorem}{Theorem}[section]
\newtheorem{lemma}[theorem]{Lemma}
\newtheorem{proposition}[theorem]{Proposition}
\theoremstyle{definition}
\newtheorem{remark}[theorem]{Remark}
\numberwithin{equation}{section}
\newcommand*\rd{\mathbb{R}^d}
\newcommand*\N{\mathcal{N}}
\newcommand{\al} {\alpha}
\newcommand{\pa} {\partial}
\newcommand{\de} {\delta}
\newcommand{\la} {\lambda}
\newcommand{\Gr} {\nabla}
\newcommand{\no} {\nonumber}
\newcommand{\noi} {\noindent}
\newcommand{\var} {\varepsilon}
\newcommand{\ra} {\rightarrow}
\newcommand{\bee} {\begin{equation}}
	\newcommand{\eee} {\end{equation}}
\newcommand{\bea} {\begin{eqnarray}}
	\newcommand{\eea} {\end{eqnarray}}
\newcommand{\Bea} {\begin{eqnarray*}}
	\newcommand{\Eea} {\end{eqnarray*}}
\def\d{\,{\rm d}}
\def\dx{\,{\rm d}x}
\def\dy{\,{\rm d}y}
\def\C{{\mathcal C}}
\def\D{{\mathcal D}}
\def\wps{{\mathcal W}}
\def\R{{\mathbb R}}
\def\N{{\mathbb N}}
\def\({{\Big(}}
\def\){{\Big)}}
\def\cc{{\C_c^\infty}}
\def\dx{\,{\rm d}x}
\def\dxy{\,{\rm d}x\,{\rm d}y}
\def\dxnyn{\,{\rm d}\bar{x}_n\,{\rm d}\bar{y}_n}
\def\dz{\,{\rm d}z}
\DeclarePairedDelimiter\abs{\lvert}{\rvert}
\DeclarePairedDelimiter\norm{\lVert}{\rVert}
\def\wps{{\mathcal{D}^{s,p}}}
\def\wos{{\mathcal{D}_0^{s,p}}(\Omega)}
\title[Global compactness results]{Global compactness results for fractional $p$-Laplace Hardy Sobolev operator on a bounded domain}
\author[N. Biswas]{Nirjan Biswas$^1$\,\orcidlink{0000-0002-3528-8388}}
\address{\rm  $^1$ Department of Mathematics, Indian Institute of Science Education and Research Pune \\
Dr. Homi Bhabha Road, Pune 411008, India \vspace{0.2 cm}}
\subjclass[2020]{35R11, 35J60, 35B33, 35A15}
\keywords{fractional $p$-Laplace Hardy-Sobolev operator, global compactness results, Palais-Smale decomposition.}
\begin{document}
\begin{abstract}
In this paper, we establish a Struwe type global compactness result for a class of nonlinear critical Hardy-Sobolev exponent problems driven by the fractional $p$-Laplace Hardy-Sobolev operator.
\end{abstract}
	\medskip
	
	\noindent
	\maketitle
\section{Introduction}
This paper aims to study the global compactness result for the fractional $p$-Laplace Hardy-Sobolev operator, in spirit of the framework introduced in \cite{Smets-TAMS}. For $s\in (0,1), p \in (1, \infty)$, and $d>sp$, we consider the following critical problem driven by the fractional $p$-Laplace Hardy Sobolev operator on a bounded open set $\Omega$ containing origin:
\begin{equation}\label{MainEq}\tag{$\mathcal{P}_{\mu,a,\alpha}$}
			(-\Delta_p)^s u -\mu\dfrac{\abs{u}^{p-2}u}{|x|^{sp}} + a(x)\abs{u}^{p-2}u = \frac{|u|^{p^*_s(\al)-2}u}{|x|^{\al}} \;\mbox{ in }\,\Omega, \; u=0 \;\mbox{ in }\,\rd \setminus \Omega, 
\end{equation}
where $\mu >0$, $0\leq \al<sp$, $p^*_s(\al):=\frac{p(d-\al)}{d-sp}$ is the critical Hardy-Sobolev exponent, which coincides with the critical Sobolev exponent $p^*_s:=\frac{dp}{d-sp}$ when $\al=0$, and $a \in L^{\frac{d-\alpha}{sp-\alpha}}(\Omega)$ is the weight function. The fractional $p$-Laplace operator $(-\Delta _p)^s$ is defined as
\begin{equation*}
   (-\Delta_{p})^{s}u(x) = 2 \lim_{\var \rightarrow 0^{+}} \int_{\mathbb{R}^{d} \backslash B(x, \var)} \frac{|u(x) - u(y)|^{p-2}(u(x)-u(y))}{|x-y|^{d+sp}}\, \dy, \; \text{for}~x \in \mathbb{R}^{d},
\end{equation*} 
where $B(x, \var)$ denotes the ball of radius $\var$ with center at $x \in \rd$. For the solution space of \eqref{MainEq}, first we recall the fractional homogeneous space $\wps$, which is defined as the completion of $\mathcal{C}^{\infty}_{c}(\rd)$ under the Gagliardo seminorm
$$\norm{u}_{\wps}\coloneqq \left( \; \iint\limits_{\rd \times \rd} \frac{|u(x)-u(y)|^p}{|x-y|^{d+sp}}\dxy \right)^{\frac{1}{p}}.$$
The space $\wps$ has the following characterization (see \cite[Theorem 3.1]{Brasco2019characterisation}):
\begin{equation*}
    \wps\coloneqq\left\{u\in L^{p^*_s}(\rd):\norm{u}_{\wps}<\infty\right\},
\end{equation*}
where $\norm{\cdot}_{\wps}$ is an equivalent norm in $\wps$ and it is a reflexive Banach space.
For details on $\wps$ and its associated embedding results, we refer to \cite{Brasco2019characterisation, DiPaVa} and the references therein. Recall the Hardy-Sobolev inequality (see \cite[Theorem 1.1]{FrSe}):
\begin{align}\label{HS}
    C(d,s,p) \int_{\rd} \frac{\abs{u}^p}{\abs{x}^{sp}} \dx \le \iint\limits_{\rd \times \rd}\frac{|u(x)-u(y)|^p}{|x-y|^{d+sp}}\,\dx\dy, \; \forall \, u \in \wps.
\end{align}
Let $\mu_{d,s,p}$ be the best constant of \eqref{HS}, i.e., 
\begin{align*}
    \mu_{d,s,p} := \inf_{u \in \wps \setminus \{ 0\}} \displaystyle \frac{\norm{u}_{\wps}^p}{\displaystyle \int_{\rd} \frac{\abs{u}^p}{\abs{x}^{sp}} \dx}.
\end{align*}
The explicit expression of $\mu_{d,s,p}>0$ is derived in \cite[Theorem 1.1]{Frank-JFA-2008}. Also, from \cite[Theorem 1.1]{Frank-JFA-2008} it is known that the inequality \eqref{HS} is strict for every $u \in \wps \setminus \{0\}$. 
If $\mu<\mu_{d,s,p}$, then 
\begin{align}\label{equivalent-norm}
    \norm{u}_{\mu} := \left( \norm{u}^p_{\wps} - \mu \int_{\rd} \frac{|u|^p}{|x|^{sp}} \dx \right)^{\frac{1}{p}}, 
\end{align}
is an equivalent norm in $\wps$, i.e., there exists $C_{\text{eqiv}}>0$ such that $C_{\text{eqiv}}\norm{u}_{\wps}^p \le \norm{u}_{\mu}^p$, for every $u \in \wps$. Now we consider the following closed subspace of $\wps$:
\begin{align*}
    \wos := \left\{ u \in \wps : u=0 \text{ in } \rd \setminus \Omega \right\},
\end{align*}
as a solution space for \eqref{MainEq}. It is endowed with the norm $\norm{u}_{\wps}$, and an equivalent norm  $\norm{u}_{\mu}$.

For $0 <\mu<\mu_{d,s,p}$, we consider the following energy functional associated with \eqref{MainEq}:
\begin{align}\label{energy}
    I_{\mu,a, \alpha}(u) := &\frac{1}{p}\norm{u}_{\wps}^p - \frac{\mu}{p} \int_{\Omega} \frac{\abs{u}^p}{\abs{x}^{sp}} \dx + \frac{1}{p} \int_{\Omega} a(x) \abs{u}^p \dx \no \\
    &- \frac{1}{p^*_s(\al)} \int_{\Omega} \frac{|u|^{p^*_s(\al)}}{|x|^{\al}} \dx, \; \forall \, u \in \wos.
\end{align}
In view of \eqref{equivalent-norm}, the embedding $\wps \hookrightarrow L^{p^*_s(\al)}(\rd, \abs{x}^{-\al})$, and \eqref{int-welldefined}, $I_{\mu,a,\alpha}$ is well-defined. Moreover, $I_{\mu,a,\alpha} \in \mathcal{C}^1(\wos, \R)$. If $u \in \wos$ is a critical point of $I_{\mu,a,\alpha}$, then it satisfies the following identity: 
\begin{align}\label{weak1}
    &\iint\limits_{\rd \times \rd}\frac{|u(x)-u(y)|^{p-2}(u(x)-u(y))(\phi(x)-\phi(y))}{|x-y|^{d+sp}} \dxy - \mu \int_{\Omega} \frac{\abs{u}^{p-2} u}{\abs{x}^{sp}} \phi \dx \no \\ 
    &+ \int_{\Omega} a(x) \abs{u}^{p-2} u \phi \dx =\int_{\Omega} \frac{\abs{u}^{p^*_s(\alpha)-2}u}{\abs{x}^{\alpha}} \phi \dx, \; \forall \, \phi \in \wos,
\end{align}
i.e., $u$ a weak solution of \eqref{MainEq}.
A sequence $\{ u_n \} \subset \wos$ is said to be a Palais-Smale (PS) sequence for $I_{\mu,a,\alpha}$ at level $\eta$, if $I_{\mu,a,\alpha}(u_n) \ra \eta$ in $\R$ and $I'_{\mu,a,\alpha}(u_n) \ra 0$ in $(\wos)'$ as $n \ra \infty$. The function $I_{\mu,a,\alpha}$ is said to satisfy (PS) condition at level $\eta$, if every (PS) sequence at level $\eta$ has a convergent subsequence. Observe that every (PS) sequence of $I_{\mu,a,\alpha}$ may not converge strongly due to the noncompactness of the embedding $\wos \hookrightarrow L^{p^*_s(\al)}(\Omega, \abs{x}^{-\al})$. Moreover, the weak limit of the (PS) sequence can be zero even if $\eta>0$. In this paper, we classify the (PS) sequence for the functional $I_{\mu,a,\alpha}$, and as an application, we find the existence of a positive weak solution to \eqref{MainEq}.

The classification of (PS) sequence was first studied by Struwe \cite{Struwe} for the following functional: 
\begin{align*}
    I_{\la}(u) = \frac{1}{2} \int_{\Omega} \abs{\Gr u}^2 - \frac{\la}{2} \int_{\Omega} u^2 - \frac{1}{2^*} \int_{\Omega} \abs{u}^{2^*},\; u \in \mathcal{D}_0^{1,2}(\Omega), 
\end{align*}
where $\la \in \R$, $\Omega$ is a smooth bounded domain in $\rd$ with $d>2$, $2^*=\frac{2d}{d-2}$ is the critical exponent, and $\mathcal{D}_0^{1,2}(\Omega)$ is the closure of $\cc(\Omega)$ with respect to $\norm{\Gr \cdot}_{L^2(\Omega)}$. Observe that every critical point of $I_{\la}$ weakly solves the Br\'ezis-Nirenberg problem 
\begin{equation}\label{brezis-nirernberg}
   -\Delta u = \la u + \abs{u}^{2^*-2}u \text{ in } \Omega; u=0 \text{ on } \pa \Omega. 
\end{equation} 
From \cite{BrNi}, it is known that below the level $\frac{1}{d}S^{\frac{d}{2}}$, where $S$ is the best constant of $\mathcal{D}_0^{1,2}(\Omega) \hookrightarrow L^{2^*}(\Omega)$, every (PS) sequence for $I_{\la}$ contains a convergent subsequence. This opens the question of classifying all the ranges where $I_{\la}$ fails to satisfy the (PS) condition. In \cite{Struwe}, Struwe answered this question by showing that if $\{ u_n \}$ is a (PS) sequence of $I_{\la}$ at level $c$, then there exist an integer $k \ge 0$, sequences $\{ x_n^i \}_n \subset \rd, \{ r_n^i \}_n \subset \R^+$, a set of functions $u \in \mathcal{D}_0^{1,2}(\Omega), \tilde{u}_i \in \mathcal{D}^{1,2}(\rd)$ for $1 \le i \le k$ (where $\mathcal{D}^{1,2}(\rd) := \{u \in L^{2^*}(\rd) : \abs{\Gr u} \in L^2(\rd)\}$), such that $u$ weakly solves \eqref{brezis-nirernberg}, $\tilde{u}_i$ weakly solves the purely critical problem on $\rd$, i.e., $-\Delta \tilde{u}_i = \abs{\tilde{u}_i}^{2^*-2}\tilde{u}_i$ in $\rd$ such that the following hold: 
\begin{align*}
    u_n = u + \sum_{i=1}^k \tilde{u}_i^{r_n^i, x_n^i} + o_n(1), \text{ in } \mathcal{D}^{1,2}(\rd),
\end{align*}
where $o_n(1) \ra 0$ as $n \ra \infty$, and
\begin{align*}
    \tilde{u}_i^{r, y}(x) \coloneqq  r^{-\frac{d
    -2}{2}} \tilde{u}_i \left( \frac{x-y}{r} \right), \text{ for } x,y \in \rd,\,r>0.
\end{align*}
Moreover, the energy level $c$ is distributed in the following manner:
\begin{align*}
    c = I_{\la}(u) + \sum_{i=1}^k I_{\infty}(\tilde{u}_i) + o_n(1), \text{ where } I_{\infty}(u) = \frac{1}{2} \int_{\rd} \abs{\Gr u}^2 - \frac{1}{2^*} \int_{\rd} \abs{u}^{2^*}, \; u \in \mathcal{D}^{1,2}(\rd).   
\end{align*}
This result is valuable to investigate the existence of ground states in nonlinear Sch\"{o}dinger equations, Yamabe-type equations, and various types of minimization problems. After this work, several authors investigated the (PS) decomposition of the energy functional related to both local and nonlocal operators on bounded domains. Notable contributions in this direction include \cite{Willem-2010, Gerard, Palatucci-Pisante-NA}. In \cite[Theorem 1.2]{Willem-2010}, Mercuri-Willem studied a similar (PS) decomposition of \eqref{MainEq} with $\mu=0, \alpha =0$, and $s=1$, i.e., namely for the following functional:
\begin{align*}
    I_p(u) \coloneqq \frac{1}{p} \int_{\Omega} \abs{\Gr u}^p + \frac{1}{p} \int_{\Omega} a(x) \abs{u}^p - \frac{\mu}{p^*} \int_{\Omega} \abs{u}^{p^*}, \; u \in \mathcal{D}_0^{1,p}(\Omega),
\end{align*}
where $\mu>0, d>p$, $a \in L^{\frac{d}{p}}(\Omega)$, $p^*=\frac{dp}{d-p}$ is the critical exponent, and $\mathcal{D}_0^{1,p}(\Omega)$ is the closure of $\cc(\Omega)$ with respect to $\norm{\Gr \cdot}_{L^p(\Omega)}$. In \cite[Theorem 1.1]{Brasco-2016}, Brasco et. al. studied the global compactness result for \eqref{MainEq} with $\mu=0$ and $\alpha=0$. They also studied the global compactness result for radially symmetric functions defined on a ball $B \subset \rd$.
For $p=2$ and $f \in (\mathcal{D}^{s,2})'$, Bhakta-Pucci in \cite[Proposition 2.1]{Bhakta-Pucci} classified the (PS) sequences associated with the following energy functional:
\begin{equation}
   I_{a,f}(u) \coloneqq \frac{1}{2} \norm{u}_{\mathcal{D}^{s,2}}^2 - \frac{1}{2^*_s} \int_{\rd} a(x) \abs{u}^{2^*_s} -  \prescript{}{(\mathcal{D}^{s,2})'}{\langle}f,u{\rangle}_{\mathcal{D}^{s,2}},\;  u \in \mathcal{D}^{s,2},
\end{equation}
where $0<a\in L^{\infty}(\rd)$, $a(x) \ra 1$ as $\abs{x} \ra \infty$. More precisely, they established that if $\{ u_n \}$ is a (PS) sequence of $I_{a,f}$ at level $c$, then there exist an integer $k \ge 0$, sequences $\{ x_n^i \}_n \subset \rd, \{ r_n^i \}_n \subset \R^+$, a set of functions $u, \tilde{u}_i \in \mathcal{D}^{s,2}$ for $1 \le i \le k$, such that $r_n^i \ra 0, \text{ and either } x_n^i \ra x^i \in \rd \text{ or } \abs{x_n^i} \ra \infty, \text{ for } 1 \le i \le k$,
$u$ weakly solves $(-\Delta)^s u = a(x) \abs{u}^{2^*_s -2}u + f$ in $\rd$, and $\tilde{u}_i \not\equiv 0$ weakly solves the corresponding homogeneous equation $(-\Delta)^s u = a(x^i) \abs{u}^{2^*_s -2}u$ in $\rd$ such that following hold:
\begin{align*}
    u_n = u + \sum_{i=1}^k a(x^i)^{-\frac{d-2s}{4s}} \tilde{u}_i^{r_n^i, x_n^i} + o_n(1), \text{ in } \mathcal{D}^{s,2},
\end{align*}
where $o_n(1) \ra 0$ as $n \ra \infty$, and 
\begin{align*}
    \tilde{u}_i^{r, y}(x) \coloneqq  r^{-\frac{d
    -2s}{2}} \tilde{u}_i \left( \frac{x-y}{r} \right), \text{ for } x,y \in \rd,\,r>0.
\end{align*}
In this case, the energy level $c$ is distributed in the following manner:
\begin{align*}
    c = I_{a,f}(u) + \sum_{i=1}^k a(x^i)^{-\frac{d-2s}{2s}} I_{1,0}(\tilde{u}_i) + o_n(1).   
\end{align*}
Observe that, by the uniqueness of the positive solution of 
\begin{align*}
    (-\Delta)^s u = u^{2^*_s-1} \text{ in } \rd,\, u \in \D^{s,2},
\end{align*}
for each $i$, $\tilde{u}_i$ is a nonlocal Aubin-Talenti bubble (up to translation and dilation). In \cite[Proposition 2.1]{Bhakta-Pucci}, the authors established the following bubble interaction (which is motivated by the works of Palatucci-Pisante (see \cite{GiAd2014, Palatucci-Pisante-NA})):
\begin{align}\label{bub-1}
\left| \log \left( \frac{r_n^i}{r_n^j} \right) \right| + \left| \frac{x_n^i - x_n^j}{r_n^i}  \right| \rightarrow \infty, \text{ for } 1 \le i \le k.
\end{align}
The (PS) decomposition in the context of systems of equations has been investigated by Peng-Peng-Wang \cite{PPW} for $s=1$ and $p=2$, and by Bhakta et al. \cite{MoSoMiPa} for $s \in (0,1)$ and $p=2$. The second work has been recently extended by Biswas-Chakraborty \cite{NS2025} for $s \in (0,1)$ and $p \in (1, \infty)$. In \cite{NS2025}, the authors observed that even for $p \neq 2$, a bubble interaction of the same type as in \eqref{bub-1} still arises.

Smets in \cite{Smets-TAMS} studied the following nonlinear Schr\"{o}dinger equation:
\begin{align}\label{Hardy-Sobolev-II}
-\Delta u - \mu\frac{u}{\abs{x}^2} = K(x) \abs{u}^{2^*-2}u \text{ in } \rd, u \in \mathcal{D}^{s,2},    
\end{align}
where $\mu>0$ and $K \in L^{\infty}(\rd)$. The author observed that, in the presence of a Hardy potential $\abs{x}^{-2}$, noncompactness arises due to concentration occurring through two distinct profiles (see also \cite{Bhak-San} for the local case involving Hardy-Sobolev-Maz'ya type equations); from the local Aubin-Talenti bubble and the local Hardy-Sobolev bubble (which satisfies \eqref{Hardy-Sobolev-II} with $K \equiv 1$).  
In \cite[Theorem 2.1]{BCP}, the authors further extended this global compactness result (see \cite[Theorem 3.1]{Smets-TAMS}) for $s \in (0,1)$. 

In this paper, we extend the study of \cite{Brasco-2016} by incorporating fractional $p$-Laplace Hardy-Sobolev operator and Hardy potential $|x|^{-\al}$ with $\al \in (0,sp)$. For $0 \le \mu<\mu_{d,s,p}$, we consider the following quantity: 
\begin{align*}
  S_{\mu} := S_{\mu}(d,p,s, \mu, \al) = \inf_{u \in \wps \setminus \{ 0\}} \frac{\norm{u}_{\wps}^p - \mu \displaystyle \int_{\rd} \frac{\abs{u}^p}{\abs{x}^{sp}}\dx }{\displaystyle \left(\int_{\rd} \frac{\abs{u}^{p^*_s(\alpha)}}{\abs{x}^{\alpha}} \dx\right)^{\frac{p}{p_s^*(\alpha)}}}.
\end{align*}
For brevity, we denote $S_{\mu}$ as $S:= S(d,p,s)$ when $\mu=0$ and $\alpha=0$. In this case, it is known from \cite[Theorem 1.1]{Brasco2016} that $S>0$ is attained by an extremal which is positive, radially symmetric, radially decreasing at the origin, and has a certain decay at infinity.  
For $\mu >0$, in \cite[Theorem 1.1]{Shen24} (when $\al=0$) and in \cite[Theorem 1.2]{ARSJ2020} (when $\al>0$), the author proved that $S_{\mu}>0$ is attained by a non-negative extremal which is again positive, radially symmetric and radially decreasing with respect to the origin. These extrema (up to a multiplication of normalized constant) satisfy the following equations weakly: 
\begin{equation}\label{limit-problem-intro}
\begin{aligned}
   &\text{I}: \quad   (-\Delta_p)^s u = |u|^{p^*_s-2}u \;\mbox{ in }\,\mathbb{R}^d, \; u \in \wps, \\
   &\text{II}: \quad (-\Delta_p)^s u -\mu\dfrac{\abs{u}^{p-2}u}{|x|^{sp}}=\frac{|u|^{p^*_s(\alpha)-2}u}{\abs{x}^{\alpha}} \;\mbox{ in }\,\mathbb{R}^d, \; u \in \wps. 
\end{aligned}
\end{equation}
The uniqueness of these extrema is not known. Nevertheless, from this discussion, we note that the solution sets for \eqref{limit-problem-intro} are non-empty. Now, we are in a position to state our main result. We would like to point out that this result is new even in the local case $s=1$.

The following theorem classifies the (PS) sequence for the energy functional $I_{\mu,a,\alpha}$ defined in \eqref{energy}. 

\begin{theorem}\label{PS-decomposition}
  Let $s\in (0,1), p \in (1, \infty), \mu \in (0,\mu_{d,s,p})$, and $\al \in [0,sp)$.  Let $\Omega$ be a bounded open set containing origin and $a \in L^{\frac{d-\alpha}{sp-\alpha}}(\Omega)$. Let $\{u_n\}$ be a (PS) sequence for $I_{\mu,a,\alpha}$ at level $\eta$. Then there exists a subsequence (still denoted by $\{ u_n \}$) for which the following hold:
  
  \noi there exist $n_1, n_2 \in \N$, sequence $\{ r_n^i \}_n \subset \R^+$ for $1\le i\le n_1$, and sequences $\{ x_n^j \}_n \subset \rd, \{ R_n^j \}_n \subset \R^+$ for $1\le j\le n_2$, functions $u, \tilde{u}_i, \tilde{U}_j$ (where $1\le i\le n_1$ and $1\le j \le n_2$) such that $u$ weakly satisfies \eqref{MainEq} without sign assumptions, $\tilde{u}_i$ weakly satisfies   
  \begin{align*}
    (-\Delta_p)^s \tilde{u}_i -\mu\dfrac{\abs{\tilde{u}_i}^{p-2}\tilde{u}_i}{|x|^{sp}}=\frac{|\tilde{u}_i|^{p^*_s(\al)-2}\tilde{u}_i}{\abs{x}^{\al}} \;\mbox{ in }\,\mathbb{R}^d, \; \tilde{u}_i \in \wps \setminus \{0\},
\end{align*}
and $\tilde{U}_j$ weakly satisfies
\begin{align*}
    (-\Delta_p)^s \tilde{U}_j=|\tilde{U}_j|^{p^*_s-2}\tilde{U}_j \;\mbox{ in }\,\mathbb{R}^d, \; \tilde{U}_j \in \wps \setminus \{0\},
\end{align*}
such that 
\begin{align*}
    &u_n = u + \sum_{i=1}^{n_1} C_{r_n^i}(\tilde{u}_i) + \sum_{j=1}^{n_2} C_{x_n^j, R_n^j}(\tilde{U}_j) + o_n(1), \\
    &\norm{u_n}_{\wps}^p = \norm{u}_{\wps}^p + \sum_{i=1}^{n_1} \norm{\tilde{u}_i}_{\wps}^p + \sum_{j=1}^{n_2} \norm{\tilde{U}_j}_{\wps}^p + o_n(1), \\
    & \eta = I_{\mu, a, \alpha}(u) + \sum_{i=1}^{n_1} I_{\mu, 0, \alpha}(\tilde{u}_i) + \sum_{j=1}^{n_2} I_{0,0,0}(\tilde{U}_j) + o_n(1), \\ 
    & r_n^i \ra 0, R_n^j \ra 0, x_n^j \ra x^j \in \rd \text{ or } \abs{x_n^j} \ra \infty, \frac{R_n^j}{\abs{x_n^j}} \ra 0, \text{ for } 1\le i\le n_1, 1\le j \le n_2,\\
    &\left| \log \left( \frac{r_n^i}{r_n^j} \right) \right| \rightarrow \infty, \text{ for } i \neq j, 1 \le i, j \le n_1, \text{ and } \\
    & \left| \log \left( \frac{R_n^i}{R_n^j} \right) \right| + \left| \frac{x_n^i - x_n^j}{R_n^i}  \right| \rightarrow \infty, \text{ for } i \neq j, 1 \le i, j \le n_2,
\end{align*}
where $o_n(1) \ra 0$ as $n \ra \infty$, $C_{r_n^i}(\tilde{u}_i) \coloneqq (r_n^i)^{-\tfrac{d-sp}{p}}\tilde u_i(\tfrac{x}{r_n^i})$, and  $C_{x_n^j, R_n^j}\tilde U_j(x)\coloneqq (R_n^j)^{-\tfrac{d-sp}{p}}\tilde U_j(\tfrac{x-x_n^j}{R_n^j})$, in the case $n_1=0$ and $n_2=0$, the above expression holds without $\tilde{u}_i, r_n^i, \tilde{U}_j, R_n^j$, and $x_n^j$. Further, if $\al >0$, then the same conclusion holds with $n_2=0$.
\end{theorem}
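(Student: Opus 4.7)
The plan is a Struwe-type iterative bubble-extraction argument adapted to accommodate simultaneously the purely critical Sobolev profiles and the Hardy-Sobolev profiles arising in \eqref{limit-problem-intro}. \emph{First}, I would show that any (PS) sequence $\{u_n\}$ at level $\eta$ is bounded in $\wos$: testing $I'_{\mu,a,\alpha}(u_n)$ against $u_n$, combining with $I_{\mu,a,\alpha}(u_n)\to\eta$, and using the equivalent norm \eqref{equivalent-norm} together with H\"older on the lower-order term (based on $a\in L^{(d-\alpha)/(sp-\alpha)}(\Omega)$ and the embedding $\wos\embd L^{p^*_s(\alpha)}(\Omega,|x|^{-\alpha})$), yields coercivity. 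Along a subsequence $u_n\rightharpoonup u$ in $\wos$, $u_n\to u$ a.e., and a Br\'ezis-Browder / Minty argument identifies $u$ as a weak solution of \eqref{MainEq}. Setting $v_n:=u_n-u$, the fractional Br\'ezis-Lieb lemma for the Gagliardo seminorm, Br\'ezis-Lieb for the weighted $L^{p^*_s(\alpha)}$ integral and for the Hardy integral, and compactness of $\int a|u_n|^p$ combine into
\[
\norm{u_n}_{\wps}^p = \norm{u}_{\wps}^p + \norm{v_n}_{\wps}^p + o_n(1),\quad I_{\mu,a,\alpha}(u_n) = I_{\mu,a,\alpha}(u) + I_{\mu,0,\alpha}(v_n) + o_n(1),
\]
with $\{v_n\}$ itself a (PS) sequence of $I_{\mu,0,\alpha}$ on $\wps$.

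\emph{Second}, if $v_n\to 0$ strongly the theorem holds with $n_1=n_2=0$; otherwise $\norm{v_n}_\mu^p\ge c_0>0$ along a subsequence. I would then introduce the L\'evy-type concentration function $Q_n(r):=\sup_{x\in\rd}\int_{B(x,r)}|v_n(y)|^{p^*_s(\alpha)}|y|^{-\alpha}\dy$, fix $0<\delta<c_0$, choose $R_n\searrow 0$ with $Q_n(R_n)=\delta$, and a near-maximizing center $x_n$. Scale invariance of the Gagliardo seminorm and of the critical integral makes $w_n(y):=R_n^{(d-sp)/p}v_n(x_n+R_n y)$ bounded in $\wps$, with a nontrivial weak limit $\tilde w$ (since the local mass $\delta$ on a unit ball is preserved in the limit). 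The nature of $\tilde w$ is dictated by $x_n/R_n$: if $x_n/R_n$ stays bounded one re-centers at $x_n=0$, the Hardy term survives the rescaling and $\tilde w$ weakly solves equation II of \eqref{limit-problem-intro}, producing a profile $\tilde u_i$; if $|x_n|/R_n\to\infty$ the Hardy potential $\mu|x_n+R_ny|^{-sp}$ vanishes in $L^1_{\mathrm{loc}}(\rd)$ and $\tilde w$ solves equation I, producing a profile $\tilde U_j$ with $R_n^j/|x_n^j|\to 0$. For $\alpha>0$, the estimate $|x_n+R_n y|^{-\alpha}R_n^\alpha\to 0$ on compact sets whenever $|x_n|/R_n\to\infty$ contradicts $Q_n(R_n)=\delta$, so the second alternative is excluded and $n_2=0$.

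\emph{Third}, I would subtract the extracted bubble and observe that the residual $v_n^{(1)}:=v_n-C(\tilde w)$ is again a (PS) sequence for $I_{\mu,0,\alpha}$ at strictly smaller level. Since every nontrivial solution of either limit problem has energy at least $\min\{\tfrac{s}{d}S^{d/(sp)},\;\tfrac{sp-\alpha}{p(d-\alpha)}\,S_\mu^{(d-\alpha)/(sp-\alpha)}\}>0$, the iteration terminates after finitely many rounds, giving $n_1, n_2$ and the claimed decompositions of $u_n$, $\norm{u_n}_{\wps}^p$ and $\eta$. The separation statements $|\log(r_n^i/r_n^j)|\to\infty$ and $|\log(R_n^i/R_n^j)|+|(x_n^i-x_n^j)/R_n^i|\to\infty$ are proved by contradiction: bounded subsequences of either quantity would force the profile extracted at a later iteration to be already encoded in an earlier bubble, contradicting the strict energy drop at that iteration. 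The hardest step is precisely this nonlocal bubble-interaction analysis for $p\neq 2$: without Hilbert orthogonality, showing that rescaled Hardy-Sobolev and Sobolev profiles centered at well-separated scales and positions decouple in the Gagliardo double integral requires the delicate pointwise/integral estimates from \cite{NS2025}, which must be adapted here to handle the mixed cross terms of the form $\langle C_{r_n^i}(\tilde u_i),\,C_{x_n^j,R_n^j}(\tilde U_j)\rangle_{\wps}$ between origin-based Hardy bubbles and free Sobolev bubbles.
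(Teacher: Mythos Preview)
Your outline is close in spirit to the paper's argument, but there is a real gap at the bubble-extraction step. You assert that the rescaled sequence $w_n$ has ``a nontrivial weak limit $\tilde w$ (since the local mass $\delta$ on a unit ball is preserved in the limit)''. Weak convergence in $\wps$ does \emph{not} preserve $\int_{B(0,1)}|w_n|^{p^*_s(\alpha)}|y|^{-\alpha}\dy$; the mass can leak to infinity or concentrate at points, and indeed for $\alpha=0$ the paper shows exactly this happens: the origin-centred rescaling may well have weak limit $0$. The paper handles nontriviality by a careful argument: one tests $I'_{\mu,0,\alpha}$ against $\phi(\cdot/r_n)\tilde u_n$ for $\phi\in\cc(B(0,1))$, and shows via Vitali that if the weak limit vanishes then $\norm{\phi\hat u_n}_{\wps}=o_n(1)$, whence the critical integral on $B(0,r)$ is $o_n(1)$, contradicting the chosen level $\delta$. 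You need to supply such an argument; the ``mass preserved'' heuristic is insufficient. Relatedly, you write ``choose $R_n\searrow 0$'', but $R_n$ is \emph{determined} by $Q_n(R_n)=\delta$ and you must \emph{prove} $R_n\to 0$ (the paper does this by contradiction using compactness of $\wps\embd L^p_{loc}$ once the weak limit is known to be nonzero).

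There is also a structural difference worth noting. You propose a single L\'evy function $Q_n(r)=\sup_x\int_{B(x,r)}|v_n|^{p^*_s(\alpha)}|y|^{-\alpha}\dy$ and a dichotomy on $|x_n|/R_n$. The paper instead uses \emph{two} concentration functions in sequence: first $Q_n(r)=\int_{B(0,r)}|\tilde u_n|^{p^*_s(\alpha)}|x|^{-\alpha}\dx$ centred at the origin (no supremum), which for $\alpha>0$ is shown to always yield a nonzero limit $\hat u$ solving \eqref{limit-problem-intro}-II; only when $\alpha=0$ and this first weak limit is zero does the paper invoke concentration--compactness to locate Dirac masses on the unit sphere and then a second function $P_n(r)=\sup_y\int_{B(y,r)}|\hat u_n|^{p^*_s}\dx$ to pull out a Sobolev bubble with $|y_n|>1/2$. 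Your one-step scheme could presumably be pushed through, but it requires exactly the nontriviality argument you omitted, and for $\alpha>0$ your exclusion of the case $|x_n|/R_n\to\infty$ via $R_n^\alpha|x_n+R_ny|^{-\alpha}\to 0$ is cleaner only once you have already controlled $\int_{B(0,1)}|w_n|^{p^*_s(\alpha)}$ uniformly --- which is fine by Sobolev and H\"older on a bounded ball, but should be stated. Finally, the theorem does not assert any separation between the $(r_n^i)$ family and the $(x_n^j,R_n^j)$ family, so the ``mixed cross terms'' you flag as the hardest step are not actually required.
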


\begin{remark}
(a) In the (PS) decomposition of $I_{0,a,0}$ (see \cite[Theorem 1.1]{Brasco-2016}), the following limiting equations appear: 
\begin{align}\label{limit-problem}
\text{I:}\; (-\Delta_p)^s u  = \abs{u}^{p^*_s-2}u \;\mbox{ in }\, \rd, \quad \text{II:}\; (-\Delta_p)^s u  = \abs{u}^{p^*_s-2}u \;\mbox{ in }\,\mathcal{H}, \; u=0 \;\mbox{ in }\,\rd \setminus \mathcal{H},
\end{align}
where $\mathcal{H} \subset \rd$ is a upper-half plane. Note that \eqref{limit-problem} is invariant under both translation and scaling. For this reason, in the Levy concentration function (constructed in \cite[Step 2, pp. 406]{Brasco-2016}), the sequences $\{x_n\} \subset \rd$ and $\{r_n\} \subset \R^+$ arise, where $r_n \ra 0$ and $\frac{\text{dist}(x_n, \pa \Omega)}{r_n} \ra \{0 , \infty \}$. Depending on the values of the second limit, $\tilde{u}_i$ weakly satisfies any one of \eqref{limit-problem}. In \cite[Theorem 1.1]{Brasco-2016}, the non-existence of any non-trivial weak solution to \eqref{limit-problem}-(II) is assumed, which immediately infers that $\tilde{u}_i$ has to satisfy \eqref{limit-problem}-(I). On the other hand, when $\mu>0$ and $\al>0$, the presence of the Hardy potentials in \eqref{MainEq} ensures that the limiting equation is only invariant under scaling. In this situation, $\tilde{u}_i$ satisfies the limiting equation \eqref{limit-problem-intro}-(II) only on $\rd$.

\noi (b) In contrast with \cite[Theorem 1.1]{Brasco-2016}, note that due to the presence of fractional $p$-Laplace Hardy-Sobolev operator, when $\al=0$, two distinct types of bubbles arise in the (PS) decomposition: one weakly solves \eqref{limit-problem-intro}-(I) and the other weakly solves \eqref{limit-problem-intro}-(II). On the other hand, when $\al>0$, only one type of bubbles appears in the (PS) decomposition.
\end{remark}

The rest of the paper is organised as follows. In the next section, we present several technical lemmas that are essential for the proof of Theorem \ref{PS-decomposition}. Section 3 is devoted to the proof of Theorem \ref{PS-decomposition}.

\noi \textbf{Notation:} 
We use the following notation. 

\noi (a) $ \displaystyle \mathcal{A}(u,v) \coloneqq \iint\limits_{\rd \times \rd} \frac{\abs{u(x) - u(y)}^{p-2} (u(x) - u(y)) (v(x) - v(y))}{\abs{x-y}^{d+sp}} \dxy.$ (b) For a set $A \subset \rd$, $|A|$ denotes the Lebesgue measure of $A$. (c) $\chi$ denotes the characteristic function. (d) $C$ denotes a generic positive constant. (e) $\norm{\cdot}_{p} := \norm{\cdot}_{L^p(\rd)}$. 

\section{Preliminaries}
This section presents several technical lemmas that will be used in the subsequent analysis. We begin by recalling the classical Brézis–Lieb lemma and some of its consequences. 

\begin{lemma}\label{BL}
    Let $1<q< \infty$. Let $\{ f_n \} \subset L^{q}(\rd)$ be a bounded sequence such that $f_n(x) \ra f(x)$ a.e. $x \in \rd$. Then the following hold: 
    \begin{enumerate}
        \item[\rm{(i)}] $\norm{f_n}_{L^q(\rd)}^q - \norm{f_n - f}_{L^q(\rd)}^q + o_n(1) = \norm{f}_{L^q(\rd)}^q.$
        \item[\rm{(ii)}] Consider the function $J_q$ defined as $J_q(t)=\abs{t}^{q-2}t$. Then 
        \begin{align*}
           J_q(f_n) - J_q(f_n-f) = J_q(f) + o_n(1) \text{ in } L^{q'}(\rd).
        \end{align*}
    \end{enumerate}
\end{lemma}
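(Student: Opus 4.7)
The proof rests on combining two ingredients: elementary pointwise inequalities for the nonlinearities $t\mapsto |t|^q$ and $J_q$, together with the truncation device of Br\'ezis--Lieb that converts pointwise a.e.\ convergence into $L^1$-convergence of the remainder terms.

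For part (i), I would first record the elementary inequality asserting that for every $\var>0$ there exists $C_\var>0$ such that
\begin{align*}
\bigl| |a+b|^q - |a|^q - |b|^q \bigr| \le \var\, |a|^q + C_\var\, |b|^q, \quad \forall\, a,b \in \R,
\end{align*}
which follows from the mean value theorem (distinguishing $|b| \le |a|$ from $|b| \ge |a|$) combined with Young's inequality. Setting $a = f_n - f$ and $b = f$ yields, a.e.\ in $\rd$,
\begin{align*}
W_n(x) := \bigl| |f_n|^q - |f_n - f|^q - |f|^q \bigr| \le \var\, |f_n - f|^q + C_\var\, |f|^q.
\end{align*}
I would then introduce the truncation $\Phi_n^\var := \max\{W_n - \var |f_n - f|^q,\; 0\}$, which satisfies $0 \le \Phi_n^\var \le (C_\var+1)|f|^q \in L^1(\rd)$ and tends pointwise to $0$ a.e.\ because $f_n \to f$ a.e. Dominated convergence gives $\int_{\rd} \Phi_n^\var \to 0$, so that
\begin{align*}
\int_{\rd} W_n \le \int_{\rd} \Phi_n^\var + \var \sup_n \norm{f_n - f}_{q}^{q}.
\end{align*}
Taking $\limsup_n$ and then $\var \to 0^+$, which is legitimate since $\{f_n\}$ is bounded in $L^q(\rd)$, yields (i).

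For part (ii), I would execute the same scheme based on the companion pointwise inequality: for every $\var>0$ there exists $C_\var>0$ such that
\begin{align*}
\bigl| J_q(a+b) - J_q(a) - J_q(b) \bigr|^{q'} \le \var\, |a|^q + C_\var\, |b|^q, \quad \forall\, a,b \in \R.
\end{align*}
This again follows from elementary analysis of $J_q(t)=|t|^{q-2}t$ (whose derivative grows like $|t|^{q-2}$) combined with Young's inequality with the conjugate exponent $q'$. Substituting $a = f_n - f$, $b = f$ and applying the same truncation-and-dominated-convergence argument, this time to the function $|J_q(f_n) - J_q(f_n - f) - J_q(f)|^{q'}$, gives the required convergence to $0$ in $L^{q'}(\rd)$.

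The only delicate point in this plan is the verification of the two pointwise inequalities; these are classical but must be handled by distinguishing the cases $1<q\le 2$ and $q>2$ because $t\mapsto |t|^q$ has qualitatively different local behaviour near the origin in these two regimes. Once the pointwise bounds are in place, both statements reduce to a single application of the Br\'ezis--Lieb truncation scheme.
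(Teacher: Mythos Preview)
Your proof is correct and is precisely the classical Br\'ezis--Lieb truncation argument. The paper does not give a self-contained proof but simply cites \cite{Br-Li} for (i) and \cite[Lemma 3.2]{Willem-2010} for (ii), where exactly this scheme is carried out.
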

\begin{proof}
Proof of (i) follows from \cite{Br-Li}, and proof of (ii) follows from \cite[Lemma 3.2]{Willem-2010}. 
\end{proof}
The above lemma leads to the following convergence.  

\begin{lemma}\label{convergence-BL}
    Let $\{u_n\}$ weakly converge to $u$ in $\wps$ and $u_n(x) \ra u(x)$ a.e. $x \in \rd$. Then up to a subsequence, the following hold
   \begin{enumerate}
       \item[\rm{(i)}] $\norm{u_n}_{\wps}^p - \norm{u_n - u}_{\wps}^p = \norm{u}^p_{\wps} + o_n(1)$. 
       \item[\rm{(ii)}] For $g \in L_{loc}^1(\rd)$ with $\int_{\rd} g(x) \abs{u}^p < \infty$, we have  
       \begin{align*}
           \displaystyle \int_{\rd} g(x) \abs{u_n}^{p} \dx -  \int_{\rd} g(x) \abs{u_n-u}^p \dx= \int_{\rd} g(x) \abs{u}^{p} \dx + o_n(1).
       \end{align*}
       \item[\rm{(iii)}] For $\al \in [0,sp]$, we have $$\displaystyle \int_{\rd} \frac{\abs{u_n}^{p^*_s(\alpha)}}{\abs{x}^{\alpha}} \dx -  \int_{\rd} \frac{\abs{u_n-u}^{p^*_s(\alpha)}}{\abs{x}^{\alpha}} \dx= \int_{\rd} \frac{\abs{u}^{p^*_s(\alpha)}}{\abs{x}^{\al}} \dx + o_n(1).$$
       \item[\rm{(iv)}] Consider the function $J_p$ defined as $J_p(t)=\abs{t}^{p-2}t$. Then 
       \begin{align*}
           \frac{J_p(u_n(x) - u_n(y))}{\abs{x-y}^{\frac{d+sp}{p'}}} - \frac{J_p\left((u_n(x)-u(x)) - (u_n(y)-u(y))\right)}{\abs{x-y}^{\frac{d+sp}{p'}}} = \frac{J_p(u(x) - u(y))}{\abs{x-y}^{\frac{d+sp}{p'}}} + o_n(1),
       \end{align*}
       in $L^{p'}(\mathbb{R}^{2d})$.
       \item[\rm{(v)}] For $\al \in [0,sp]$, consider the function $J_{p^*_s(\al)}$ defined as $J_{p^*_s(\al)}(t)=\abs{t}^{p^*_s(\al)-2}t$. Then 
       \begin{align*}
           \frac{J_{p^*_s(\al)}(u_n(x))}{\abs{x}^{\frac{\al}{(p^*_s(\al))'}}} - \frac{J_{p^*_s(\al)}(u_n(x)-u(x))}{\abs{x}^{\frac{\al}{(p^*_s(\al))'}}} = \frac{J_{p^*_s(\al)}(u(x))}{\abs{x}^{\frac{\al}{(p^*_s(\al))'}}} + o_n(1),
       \end{align*}
       in $L^{(p^*_s(\al))'}(\rd)$.
   \end{enumerate}
    \end{lemma}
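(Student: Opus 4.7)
The plan is to reduce each of the five items to an application of Lemma \ref{BL} in an appropriate $L^q$ space on $\rd$ or on the product space $\rd \times \rd$. Since $\{u_n\}$ converges weakly in $\wps$, it is automatically bounded in $\wps$; call $M$ a uniform bound. The a.e.\ pointwise convergence of the lifted sequences introduced below then follows from $u_n \to u$ a.e.\ on $\rd$, combined with Fubini–Tonelli for the product–space constructions.

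For parts (i) and (iv), I would work on $\rd\times\rd$ with the Gagliardo lift
\[
F_n(x,y) := \frac{u_n(x)-u_n(y)}{|x-y|^{(d+sp)/p}}, \qquad F(x,y) := \frac{u(x)-u(y)}{|x-y|^{(d+sp)/p}},
\]
which is bounded by $M$ in $L^p(\rd\times\rd)$ and converges to $F$ a.e. Applying Lemma \ref{BL}(i) with $q=p$ yields (i) immediately. For (iv), Lemma \ref{BL}(ii) with $q=p$ gives $J_p(F_n)-J_p(F_n-F)\to J_p(F)$ in $L^{p'}(\rd\times\rd)$; since $J_p(\lambda a) = |\lambda|^{p-2}\lambda\, J_p(a)$ for $\lambda\in\R$, and $(d+sp)(p-1)/p = (d+sp)/p'$, this is exactly the stated conclusion.

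For parts (iii) and (v), the plan is analogous on $\rd$ with
\[
f_n(x) := |x|^{-\alpha/p^*_s(\alpha)}\, u_n(x), \qquad f(x) := |x|^{-\alpha/p^*_s(\alpha)}\, u(x),
\]
which is bounded in $L^{p^*_s(\alpha)}(\rd)$ by the Hardy–Sobolev embedding $\wps \hookrightarrow L^{p^*_s(\alpha)}(\rd,|x|^{-\alpha})$ (valid for $\alpha\in[0,sp]$: this is the classical Sobolev embedding at $\alpha=0$ and the Hardy inequality \eqref{HS} at $\alpha=sp$), and converges a.e.\ to $f$. Lemma \ref{BL}(i) with $q=p^*_s(\alpha)$ gives (iii). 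For (v), Lemma \ref{BL}(ii) combined with the same scaling identity for $J_q$ and the exponent arithmetic $\alpha(p^*_s(\alpha)-1)/p^*_s(\alpha) = \alpha/(p^*_s(\alpha))'$ produces the claim.

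Part (ii) is the one place needing extra care, and I expect it to be the main obstacle. Formally one would apply Lemma \ref{BL}(i) to $g^{1/p} u_n$ in $L^p(\rd)$, but boundedness of $\int g|u_n|^p\dx$ is not an immediate consequence of $\int g|u|^p<\infty$. I would instead argue directly from the elementary Brézis–Lieb inequality
\[
\bigl| |u_n|^p - |u_n-u|^p - |u|^p \bigr| \le \varepsilon\, |u_n-u|^p + C(\varepsilon)\, |u|^p \qquad \forall\,\varepsilon>0,
\]
multiply by $g\ge 0$, integrate, invoke $g|u|^p\in L^1(\rd)$ and the a.e.\ convergence to pass $n\to\infty$, then send $\varepsilon\to 0$. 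The required boundedness $\sup_n\int g|u_n|^p\dx < \infty$ is available in every application of interest (for $g=|x|^{-sp}$ via the Hardy inequality \eqref{HS}, and for $g=a\in L^{(d-\alpha)/(sp-\alpha)}$ via Hölder and the embedding into $L^{p^*_s(\alpha)}$), so the argument closes.
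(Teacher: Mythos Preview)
Your proposal is correct and matches the paper's intent: the paper states Lemma~\ref{convergence-BL} without proof, presenting it simply as a consequence of Lemma~\ref{BL}, and your reductions---the Gagliardo lift to $L^p(\rd\times\rd)$ for (i) and (iv), and multiplication by $|x|^{-\alpha/p^*_s(\alpha)}$ together with the Hardy--Sobolev embedding to land in $L^{p^*_s(\alpha)}(\rd)$ for (iii) and (v)---are exactly the natural way to carry this out. Your caveat on (ii) is well taken: as written, the hypotheses do not guarantee $\sup_n \int |g|\,|u_n|^p<\infty$ (nor that $g\ge 0$), so the statement tacitly relies on extra information that, as you note, is supplied in every instance where the paper invokes it (via \eqref{HS} for $g=|x|^{-sp}$, and via H\"older for $g=a$).
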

The following lemma states the convergence of some integrals. For proof, we refer to \cite[Lemma 2.5]{NS2025}.

\begin{lemma}\label{convergence-integrals}
Let $\{ u_n \}$ weakly converge to $u$ in $\wps$. 
    \begin{enumerate}
    \item[\rm{(i)}] Let $g \in L_{loc}^1(\rd)$ with $\int_{\rd} g(x) \abs{u}^p < \infty$. Then up to a subsequence  
        \begin{align*}
            & \lim_{n \ra \infty} \int_{\rd} g(x) \abs{u_n(x)}^{p -2} u_n(x) \phi(x) \dx = \int_{\rd} g(x) \abs{u(x)}^{p -2} u(x) \phi(x) \dx, \text{ and } \\
            & \lim_{n \ra \infty} \int_{\rd} g(x) \abs{\phi(x)}^{p -2} \phi(x) u_n(x) \dx = \int_{\rd} g(x) \abs{\phi(x)}^{p -2} \phi(x) u(x) \dx,
        \end{align*}
        for every $\phi \in \wps$.
    \item[\rm{(ii)}] Let $\al \in [0,sp]$. Then up to a subsequence
    \begin{align*}
        & \lim_{n \ra \infty} \int_{\rd} \frac{\abs{u_n(x)}^{p^*_s(\alpha) -2} u_n(x)}{\abs{x}^{\alpha}} \phi(x) \dx = \int_{\rd} \frac{\abs{u(x)}^{p^*_s(\alpha) -2} u(x)}{\abs{x}^{\alpha}} \phi(x) \dx, \\ & \lim_{n \ra \infty} \int_{\rd} \frac{\abs{\phi(x)}^{p^*_s(\alpha) -2} \phi(x)}{\abs{x}^{\alpha}} u_n(x) \dx = \int_{\rd} \frac{\abs{\phi(x)}^{p^*_s(\alpha) -2} \phi(x)}{\abs{x}^{\alpha}} u(x) \dx.
    \end{align*}
    for every $\phi \in \wps$.
    \item[\rm{(iii)}] Then up to a subsequence
    \begin{align*}
    \mathcal{A}(u_n,\phi) \ra \mathcal{A}(u,\phi), \text{ and } \mathcal{A}(\phi,u_n) \ra \mathcal{A}(\phi,u),
    \end{align*}
    for every $\phi \in \wps$.
    \end{enumerate}
\end{lemma}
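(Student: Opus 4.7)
The plan is to prove each of (i)--(iii) by combining three ingredients: weak convergence $u_n \wra u$ in $\wps$, almost-everywhere pointwise convergence along a subsequence (obtained from the compact local embedding $\wps \hookrightarrow L^p_{\mathrm{loc}}(\rd)$ via a standard diagonal extraction), and a duality argument. In each part, one of the two displayed limits is linear in $u_n$ and follows at once from $u_n \wra u$ once the relevant weight/test object is identified as an element of $(\wps)'$; the other limit involves the nonlinear composition $J_p(u_n) = \abs{u_n}^{p-2}u_n$ and requires an $L^{p'}$-type weak convergence argument driven by a.e.\ convergence together with a uniform norm bound.

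\emph{Part (i).} The compact embedding yields, up to a subsequence, $u_n \to u$ a.e., so $J_p(u_n) \to J_p(u)$ a.e. To convert this into the desired integral convergence, the key is the uniform bound $\sup_n \int_{\rd} g\abs{u_n}^p\dx < \infty$, which one obtains from the Brézis--Lieb-type identity in Lemma \ref{convergence-BL}(ii) applied with the Radon measure $g\,\dx$: $\int g\abs{u_n}^p\dx = \int g\abs{u}^p\dx + \int g\abs{u_n - u}^p\dx + o_n(1)$, together with Fatou on the second piece. Consequently $g^{1/p'}J_p(u_n)$ is bounded in $L^{p'}(\rd)$, converges a.e., and hence converges weakly in $L^{p'}(\rd)$ to $g^{1/p'}J_p(u)$. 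Pairing with $g^{1/p}\phi \in L^p(\rd)$ gives the first identity. The second identity is immediate from $u_n \wra u$ in $\wps$, since the linear functional $v\mapsto \int g\abs{\phi}^{p-2}\phi\, v\dx$ belongs to $(\wps)'$ by Hölder and the Sobolev embedding.

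\emph{Part (ii).} The argument mirrors (i), with $g$ replaced by the singular weight $\abs{x}^{-\al}$ and the exponent $p$ replaced by $p^*_s(\al)$. The requisite integrability is furnished by the Hardy--Sobolev embedding $\wps \hookrightarrow L^{p^*_s(\al)}(\rd, \abs{x}^{-\al})$, which provides both a uniform bound on $\int_{\rd} \abs{x}^{-\al}\abs{u_n}^{p^*_s(\al)}\dx$ (via Lemma \ref{convergence-BL}(iii)) and the dual statement. A.e.\ convergence of $\abs{x}^{-\al/(p^*_s(\al))'}J_{p^*_s(\al)}(u_n)$ together with the $L^{(p^*_s(\al))'}$ bound yields weak convergence in $L^{(p^*_s(\al))'}(\rd)$, and pairing with $\abs{x}^{-\al/p^*_s(\al)}\phi \in L^{p^*_s(\al)}(\rd)$ gives the first identity. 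The second is again a direct consequence of $u_n \wra u$, after noting that $v\mapsto \int \abs{x}^{-\al}\abs{\phi}^{p^*_s(\al)-2}\phi\, v\dx$ lies in $(\wps)'$ by Hölder combined with the Hardy--Sobolev embedding.

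\emph{Part (iii).} I would view the Gagliardo difference quotient $Du_n(x,y):= (u_n(x)-u_n(y))/\abs{x-y}^{(d+sp)/p}$ as an element of $L^p(\rd\times\rd)$. The Gagliardo seminorm bound yields boundedness of $\{Du_n\}$ in $L^p$, and a.e.\ convergence of $u_n$ transfers to a.e.\ convergence of $Du_n$, so $J_p(Du_n)\to J_p(Du)$ a.e.\ with uniform $L^{p'}$ bound. Hence $J_p(Du_n) \wra J_p(Du)$ in $L^{p'}(\rd\times\rd)$, and pairing against $D\phi \in L^p(\rd\times\rd)$ yields $\mathcal{A}(u_n,\phi)\to \mathcal{A}(u,\phi)$. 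For $\mathcal{A}(\phi,u_n)\to \mathcal{A}(\phi,u)$, the integrand is linear in $u_n$ and the map $v\mapsto \mathcal{A}(\phi,v)$ is bounded by $\norm{\phi}_{\wps}^{p-1}\norm{v}_{\wps}$, so it lies in $(\wps)'$ and the limit follows from $u_n\wra u$.

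The main obstacle is the uniform integrability step in part (i): the hypothesis only asserts $\int g\abs{u}^p<\infty$ and provides no direct control on $\int g\abs{u_n}^p$. The key observation is that Lemma \ref{convergence-BL}(ii) allows one to split this integral along the a.e.\ convergent sequence and recover the needed bound; a parallel remark applies in (ii) via Lemma \ref{convergence-BL}(iii). Once this uniform control is in hand, every remaining step is either a Vitali/weak-$L^{p'}$ argument or a direct application of weak convergence against a fixed dual element.
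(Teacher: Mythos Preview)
The paper does not prove this lemma in the text; it cites \cite{NS2025}, Lemma~2.5. Your scheme---extract a.e.\ convergence via the compact local embedding, then for the nonlinear-in-$u_n$ term use ``bounded in $L^{q'}$ plus a.e.\ convergence implies weak $L^{q'}$ convergence'', and for the linear-in-$u_n$ term pair against a fixed dual element---is the standard one and works cleanly for parts (ii) and (iii), where the Hardy--Sobolev embedding and the Gagliardo seminorm supply all needed uniform bounds automatically.

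There is, however, a gap in your treatment of part (i). You assert $\sup_n \int_{\rd} g\abs{u_n}^p\dx < \infty$ via the Br\'ezis--Lieb identity $\int g\abs{u_n}^p = \int g\abs{u}^p + \int g\abs{u_n-u}^p + o_n(1)$ ``together with Fatou on the second piece''. This does not yield an upper bound: the term $\int g\abs{u_n-u}^p$ is itself uncontrolled under the stated hypotheses, and Fatou only provides a (trivial) lower bound. A concrete obstruction: take $u\equiv 0$, $g(x)=(1+\abs{x})^{\beta}$ with $\beta>0$, and $u_n(x)=n^{-(d-sp)/p}\psi(x/n)$ for a fixed bump $\psi\in\cc(\rd)$; then $u_n\rightharpoonup 0$ in $\wps$ but $\int g\abs{u_n}^p\to\infty$. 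The same issue undermines your claim that $v\mapsto \int g\abs{\phi}^{p-2}\phi\,v\dx$ lies in $(\wps)'$. In every application the paper actually makes ($g(x)=\abs{x}^{-sp}$ via the Hardy inequality \eqref{HS}, and $g=a$ via the H\"older estimate \eqref{int-welldefined}), the map $v\mapsto \int g\abs{v}^p$ is continuous on $\wps$, and under that stronger (implicit) hypothesis your argument for (i) does go through. So the defect lies as much in the lemma's stated hypothesis as in your method, but the specific ``Fatou'' justification you give is incorrect and should be replaced by an explicit assumption that $\wps\hookrightarrow L^p(\abs{g}\dx)$ continuously.
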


\begin{remark}\label{convergence-integrals-remark}
In particular, all the convergences in Lemma \ref{convergence-BL} and Lemma \ref{convergence-integrals} hold for a sequence $\{ u_n \}$ with $u_n \rightharpoonup u$ in $\wos$.  
\end{remark}

\begin{lemma}\label{compact-embedding}
   Let $\al \in [0,sp)$ and $a \in L^{\frac{d-\alpha}{sp-\alpha}}(\Omega)$. Then the following embedding into the weighted Lebesgue space: $$\wos \hookrightarrow L^p(a, \Omega)$$ is compact. 
\end{lemma}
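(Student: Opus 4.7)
The plan is a truncation argument combined with H\"older's inequality. Take a sequence $\{u_n\} \subset \wos$ with $u_n \wra u$ weakly in $\wos$; by the standard subtraction $u_n-u$ argument it suffices to show that $\int_{\Om} |a(x)||u_n - u|^p \dx \ra 0$.

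First I would collect two standard embedding facts. Since $\Om$ is bounded and $p < p^*_s$, the fractional Sobolev compact embedding on a bounded domain gives $\wos \embd L^p(\Om)$ compactly, so $u_n \ra u$ strongly in $L^p(\Om)$. Second, the critical embedding $\wps \embd L^{p^*_s}(\rd)$ together with $L^{p^*_s}(\Om) \embd L^{p^*_s(\al)}(\Om)$ (which holds because $\Om$ is bounded and $p^*_s(\al) \le p^*_s$ for $\al \in [0, sp)$) yields that $\{u_n - u\}$ is bounded in $L^{p^*_s(\al)}(\Om)$.

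Next, for any $\var > 0$, absolute continuity of the Lebesgue integral lets me choose $M = M(\var) > 0$ large enough that $\|a \chi_{\{|a| > M\}}\|_{L^{(d-\al)/(sp-\al)}(\Om)} < \var$, and I split
\begin{align*}
\int_{\Om} |a| |u_n - u|^p \dx \;\le\; M \int_{\Om} |u_n - u|^p \dx + \int_{\{|a| > M\}} |a| |u_n - u|^p \dx.
\end{align*}
The first term tends to zero by the compact embedding noted above. For the second term, H\"older's inequality with conjugate exponents $q = \tfrac{d-\al}{sp-\al}$ and $q' = \tfrac{d-\al}{d-sp}$, together with the identity $pq' = p^*_s(\al)$, gives
\begin{align*}
\int_{\{|a| > M\}} |a| |u_n - u|^p \dx \;\le\; \|a \chi_{\{|a| > M\}}\|_{L^q(\Om)}\, \|u_n - u\|_{L^{p^*_s(\al)}(\Om)}^{\,p} \;\le\; C \var,
\end{align*}
where $C$ is a uniform bound on $\|u_n - u\|_{L^{p^*_s(\al)}(\Om)}^p$ coming from the continuous embedding above. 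Passing to $\limsup_{n \ra \infty}$ and then letting $\var \ra 0$ yields the claim.

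The only point that requires care is the H\"older exponent bookkeeping: the key identity $p \cdot \tfrac{d-\al}{d-sp} = p^*_s(\al)$ is precisely why the paper imposes the integrability exponent $\tfrac{d-\al}{sp-\al}$ on $a$, since it makes the tail of the weight absorb exactly the critical Hardy-Sobolev integrability available from $\wos$. I do not anticipate any genuine obstacle beyond this observation; the rest of the argument follows the textbook vanishing-concentration split, with the bounded part of $a$ handled by subcritical compactness and the small tail handled by the continuous critical embedding, so no compactness at the critical exponent is needed.
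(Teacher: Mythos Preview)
Your proof is correct and follows essentially the same strategy as the paper: split the weight $a$ into a bounded piece (handled by the compact embedding $\wos \hookrightarrow L^p(\Omega)$) and a piece with small $L^{\frac{d-\alpha}{sp-\alpha}}$-norm (handled by H\"older and the continuous critical embedding). The only cosmetic difference is that the paper obtains the bounded approximant via density of $\cc(\Omega)$ in $L^{\frac{d-\alpha}{sp-\alpha}}(\Omega)$, whereas you truncate at level $M$; both produce the same two-term decomposition and the same estimates.
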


\begin{proof}
First, using H\"{o}lder's inequality with the conjugate pair $(\frac{d-\alpha}{sp-\alpha}, \frac{d-\alpha}{d-sp})$, and the embedding $\wos \hookrightarrow L^{p^*_s(\alpha)}(\Omega)$, observe that 
\begin{align}\label{int-welldefined}
    \int_{\Omega} |a(x)| |u|^p \dx \le \norm{a}_{L^{\frac{d-\alpha}{sp-\alpha}}(\Omega)} \norm{u}^p_{p^*_s(\alpha)} \le\norm{a}_{L^{\frac{d-\alpha}{sp-\alpha}}(\Omega)} \norm{u}_{\wps}^{\frac{d-\alpha}{d-sp}}, \; \forall \, u \in \wos. 
\end{align}
Hence $\wos \hookrightarrow L^p(a, \Omega)$. Let $\var>0$ be given. By the density of $\cc(\Omega)$ in $L^{\frac{d-\alpha}{sp-\alpha}}(\Omega)$, there exists $a_{\var} \in \cc(\Omega)$ such that $\norm{a-a_{\var}}_{L^{\frac{d-\alpha}{sp-\alpha}}(\Omega)} < \var$. Let $K:= \text{supp}(a_{\var})$.  If $u_n \rightharpoonup u$ in $\wos$, then  
    \begin{align*}
        \int_{\Omega} \abs{a} \abs{u_n -u}^p \dx \le \int_{\Omega} \abs{a-a_{\var}} \abs{u_n -u}^p \dx + \norm{a_{\var}}_{L^{\infty}(K)} \int_{\Omega} \abs{u_n-u}^p \dx = o_n(1),
    \end{align*}
    since $\wos \hookrightarrow L^p(\Omega)$ compactly, and 
    \begin{align*}
        \int_{\Omega} \abs{a-a_{\var}} \abs{u_n -u}^p \dx \le C(d,p,s)\norm{a-a_{\var}}_{L^{\frac{d-\alpha}{sp-\alpha}}(\Omega)} \norm{u_n -u}^{\frac{d-\alpha}{d-sp}}_{\wps} \le C \var.
    \end{align*}
    Hence, the embedding $\wos \hookrightarrow L^p(a, \Omega)$ is compact. 
\end{proof}

The following proposition states that if a sequence in the group $G = \rd\rtimes (0,\infty)$ sends every element in $\wps$ to $0$ under the action $\mathcal{A}$, then the sequence must go to infinity with respect to the metric $d$ of $G$, defined as
\begin{align*}
     \displaystyle d\left((y,\lambda),(w,\sigma)\right) \coloneqq \left| \log(\tfrac{\lambda}{\sigma}) \right| + |y-w|.
\end{align*}
Define 
$$ C_{y,\lambda}u(x)\coloneqq \lambda^{-\tfrac{d-sp}{p}}u \left(\tfrac{x-y}{\lambda}\right), \; \forall \, u\in\wps;\,y\in\rd;\,\lambda>0.$$
\begin{proposition}\label{weak-bub}
Let $\{(a_n,\delta_n)\},\,\{(y_n,\lambda_n)\}\subset G$ be such that 
\begin{align*}
\mathcal{A}(C_{a_n,\delta_n}u,C_{y_n,\lambda_n}v)\to 0 \text{ for every }u,\,v\in\wps.
\end{align*}
Then $\left| \log\left(\frac{\delta_n}{\lambda_n}\right) \right| + \left| \frac{a_n-y_n}{\lambda_n} \right| \to\infty,$ as $n \ra \infty$.
\end{proposition}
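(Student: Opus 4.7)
The plan is to exploit the isometric action of $G$ on $\wps$ and the resulting invariance of $\mathcal{A}$, and then argue by contradiction using the strong continuity of the action. The key preliminary facts I would record are: (i) for each $(y, \lambda) \in G$, the operator $C_{y, \lambda}$ is a linear isometry of $\wps$ (a change of variables $\tilde x = (x-y)/\lambda$, $\tilde x' = (x'-y)/\lambda$ in the Gagliardo double integral shows this), and hence $\mathcal{A}(C_{y, \lambda} u, C_{y, \lambda} v) = \mathcal{A}(u, v)$ for all $u, v \in \wps$; (ii) the semidirect-product law on $G$ gives $C_{y_n, \lambda_n}^{-1} C_{a_n, \delta_n} = C_{\tilde z_n, \tilde \mu_n}$, where I set $\tilde z_n := (a_n - y_n)/\lambda_n$ and $\tilde \mu_n := \delta_n/\lambda_n$. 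Combining (i) and (ii), the hypothesis rewrites as
$$\mathcal{A}(C_{\tilde z_n, \tilde \mu_n} u, v) = \mathcal{A}(C_{a_n, \delta_n} u, C_{y_n, \lambda_n} v) \to 0, \quad \forall\, u, v \in \wps,$$
so that the conclusion is equivalent to showing $|\log \tilde \mu_n| + |\tilde z_n| \to \infty$.

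I would then argue by contradiction: suppose $|\log \tilde \mu_n| + |\tilde z_n|$ stays bounded along a subsequence. Passing to a further subsequence yields $\tilde \mu_n \to \tilde \mu \in (0, \infty)$ and $\tilde z_n \to \tilde z \in \rd$. The main step of the proof is to show that for every fixed $u \in \wps$,
$$C_{\tilde z_n, \tilde \mu_n} u \to C_{\tilde z, \tilde \mu} u \quad \text{strongly in } \wps.$$
Using the isometry, this reduces to showing $C_{w_n, \nu_n} u \to u$ in $\wps$ whenever $(w_n, \nu_n) \to (0,1)$ in $G$. For $u \in \cc(\rd)$ this follows by dominated convergence applied to the Gagliardo double integral, since the supports of $C_{w_n, \nu_n} u$ lie in a fixed compact set and the relevant difference quotients are uniformly bounded. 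For general $u \in \wps$ the conclusion follows from the density of $\cc(\rd)$ in $\wps$ combined with the uniform isometry $\|C_{w, \nu} \varphi\|_{\wps} = \|\varphi\|_{\wps}$, via a standard $3\varepsilon$-argument.

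Combined with the continuity of $\mathcal{A}(\cdot, v) : \wps \to \R$ (which follows from the H\"older-type bound $|\mathcal{A}(u, v)| \le \|u\|_{\wps}^{p-1}\|v\|_{\wps}$ together with the continuity of the Nemytskii map induced by $J_p(t) = |t|^{p-2}t$), this passage to the limit yields $\mathcal{A}(C_{\tilde z, \tilde \mu} u, v) = 0$ for every $u, v \in \wps$. Choosing any $u \in \wps$ with $\|u\|_{\wps} > 0$ and setting $v := C_{\tilde z, \tilde \mu} u$ then gives, by the isometry,
$$\mathcal{A}(C_{\tilde z, \tilde \mu} u, v) = \mathcal{A}(v, v) = \|v\|_{\wps}^p = \|u\|_{\wps}^p > 0,$$
the desired contradiction. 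I expect the main obstacle to be the strong continuity of the group action on all of $\wps$: while immediate on smooth compactly supported test functions, the transfer to an arbitrary $u \in \wps$ relies essentially on density of $\cc(\rd)$ together with the uniform isometry, and is the step requiring the most care in a detailed write-up.
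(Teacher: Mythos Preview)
Your argument is correct and follows the natural route: reduce via the group invariance $\mathcal{A}(C_{y,\lambda}w_1,C_{y,\lambda}w_2)=\mathcal{A}(w_1,w_2)$ to the one-parameter form $\mathcal{A}(C_{\tilde z_n,\tilde\mu_n}u,v)\to 0$, then argue by contradiction using strong continuity of the action at a finite limit point. The paper itself does not give a proof here but defers to \cite[Proposition 2.8]{NS2025}; your outline is precisely the standard cocompactness/contradiction argument one expects in that reference, so there is nothing to compare beyond noting that your write-up is self-contained where the paper is not.
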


For a proof of Proposition \ref{weak-bub}, we refer to \cite[Proposition 2.8]{NS2025}.
Next, define 
$$ C_{\lambda}u(x)\coloneqq \lambda^{-\tfrac{d-sp}{p}}u \left(\tfrac{x}{\lambda}\right), \; \forall \, u\in\wps;\la>0.$$

In view of the above proposition, we also have the following convergence.

\begin{proposition}\label{weak-bub_1}
Let $\{\delta_n\},\,\{\lambda_n\}\subset (0, \infty)$ be such that 
\begin{align*}
\mathcal{A}(C_{\delta_n}u,C_{\lambda_n}v)\to 0 \text{ for every }u,\,v\in\wps.
\end{align*}
Then $\left| \log\left(\frac{\delta_n}{\lambda_n}\right) \right| \to\infty,$ as $n \ra \infty$.
\end{proposition}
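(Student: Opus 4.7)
The plan is to deduce Proposition \ref{weak-bub_1} as an immediate corollary of Proposition \ref{weak-bub} by specializing the translation parameters to zero. The key observation is that the pure-scaling action $C_\lambda$ coincides with the full action $C_{0,\lambda}$ of the group $G = \rd \rtimes (0,\infty)$; indeed, $C_{y,\lambda}u(x) = \lambda^{-(d-sp)/p} u((x-y)/\lambda)$ reduces to $C_\lambda u(x) = \lambda^{-(d-sp)/p} u(x/\lambda)$ precisely when $y = 0$.

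Concretely, given sequences $\{\delta_n\}, \{\lambda_n\} \subset (0,\infty)$ satisfying $\mathcal{A}(C_{\delta_n}u, C_{\lambda_n}v) \to 0$ for every $u, v \in \wps$, I would introduce the auxiliary group sequences $a_n \coloneqq 0$ and $y_n \coloneqq 0$ for all $n$. Then $C_{a_n,\delta_n}u = C_{\delta_n}u$ and $C_{y_n,\lambda_n}v = C_{\lambda_n}v$, so the hypothesis of Proposition \ref{weak-bub} is satisfied. Its conclusion gives
\begin{align*}
    \left| \log\!\left(\frac{\delta_n}{\lambda_n}\right) \right| + \left| \frac{a_n - y_n}{\lambda_n} \right| \to \infty,
\end{align*}
and since the second term is identically zero, this collapses to $|\log(\delta_n/\lambda_n)| \to \infty$, which is exactly the desired conclusion.

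If a more self-contained argument were preferred, one could proceed by contradiction: assume $\log(\delta_n/\lambda_n)$ remains bounded along a subsequence, and exploit the scale-invariance identity $\mathcal{A}(C_\mu u, C_\mu v) = \mathcal{A}(u,v)$ to reduce to $\lambda_n = 1$ and $\delta_n \to c \in (0,\infty)$. The continuity of the dilation map on $\wps$ then yields $C_{\delta_n}u \to C_c u$ strongly, so by Lemma \ref{convergence-integrals}(iii) one gets $\mathcal{A}(C_{\delta_n}u, v) \to \mathcal{A}(C_c u, v)$; choosing $v = C_c u$ for a fixed nonzero $u$ makes the limit equal $\|C_c u\|_{\wps}^p > 0$, a contradiction. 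I do not expect any real obstacle here: all the substantive work (handling the weak nondegeneracy of $\mathcal{A}$ on the two-parameter group) has already been carried out in the proof of Proposition \ref{weak-bub} cited from \cite{NS2025}, and the present statement simply restricts that result to the subgroup of pure dilations.
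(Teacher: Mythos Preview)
Your proposal is correct and matches the paper's approach: the paper simply states that Proposition \ref{weak-bub_1} follows ``in view of the above proposition,'' i.e., by specializing Proposition \ref{weak-bub} to $a_n=y_n=0$, exactly as you do. The alternative self-contained contradiction argument you sketch is also valid but goes beyond what the paper provides.
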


\section{Global compactness results} 

\noi \textbf{Proof of Theorem \ref{PS-decomposition}:} Since $\{u_n\} \subset \wps$ is a (PS) sequence of $I_{\mu,a,\alpha}$ at level $\eta$, we have 
\begin{align}\label{PSD-0}
 \norm{u_n}_{\mu}^p + \int_{\Omega} a(x) \abs{u_n}^p \dx - \frac{p}{p^*_s(\al)} \int_{\Omega} \frac{|u_n|^{p^*_s(\al)}}{|x|^{\al}} \dx  =  pI_{\mu,a,\alpha}(u_n) = p\eta + o_n(1), 
\end{align}
and 
\begin{align}\label{PSD-1}
    pI_{\mu,a,\alpha}(u_n) - \prescript{}{(\wps)'}{\langle} I'_{\mu, a}(u_n),(u_n){\rangle}_{\wps} \le C + o_n(1) \norm{u_n}_{\mu}.
\end{align}
Now 
\begin{align*}
    \text{ L.H.S. of \eqref{PSD-1} } \ge \left(1- \frac{p}{p^*_s(\al)} \right) \int_{\Omega} \frac{|u_n|^{p^*_s(\al)}}{|x|^{\al}} \dx.
\end{align*}
Hence, in view of \eqref{PSD-0} and \eqref{PSD-1}, we see that 
\begin{align}\label{PSD-1.1}
    \int_{\Omega} \frac{|u_n|^{p^*_s(\al)}}{|x|^{\al}} \dx \le C\left(1+ \norm{u_n}_{\wps}\right).
\end{align}
Further, the H\"{o}lder's inequality with the conjugate pair $(\frac{d-\alpha}{d-sp},\frac{d-\alpha}{sp-\alpha})$ yields
\begin{align*}
    \int_{\Omega} \abs{a(x)} \abs{u_n}^p \dx & = \int_{\Omega} \frac{\abs{u_n}^p}{\abs{x}^{\frac{\alpha(d-sp)}{d-\alpha}}} \abs{a(x)} \abs{x}^{\frac{\alpha(d-sp)}{d-\alpha}}  \dx  \\
    &\le \norm{\abs{x}^{\frac{\alpha(d-sp)}{d-\alpha}}}_{L^{\infty}(\Omega)} \left( \int_{\Omega} \frac{\abs{u(x)}^{p^*_s(\alpha)}}{\abs{x}^{\alpha}} \dx \right)^{\frac{d-sp}{d-\alpha}} \left( \int_{\Omega} \abs{a(x)}^{\frac{d-\alpha}{sp-\alpha}} \dx \right)^{\frac{sp-\alpha}{d-\alpha}}\\
    & \le C(d,s,p, \alpha) \norm{a}_{L^{\frac{d-\alpha}{sp-\alpha}}(\Omega)} \left(1 + \norm{u_n}_{\mu}^{\frac{p}{p^*_s(\alpha)}} \right).
\end{align*}
Hence, in view of \eqref{PSD-0}, $\{ u_n \}$ is a bounded sequence on $\wps$. By the reflexivity of $\wps$, let $\{ u_n \}$ weakly converge to $\tilde{u}$ in $\wps$ (up to a subsequence). Since $I'_{\mu,a,\alpha}(u_n) \ra 0$ in $(\wps)'$, for every $\phi \in \wps$ we have
\begin{align*}
    \mathcal{A}(u_n , \phi) - \mu \int_{\rd} \frac{\abs{u_n}^{p-2} u_n}{\abs{x}^{sp}} \phi \dx +\int_{\Omega} a(x) \abs{u_n}^{p-2} \phi \dx =\int_{\Omega} \frac{\abs{u}^{p^*_s(\alpha)-2}u}{\abs{x}^{\alpha}} \phi \dx, \; \forall \, \phi \in \wos.
\end{align*}
Taking the limit as $n \ra \infty$ in the above identity and using Lemma \ref{convergence-integrals}, we see that $\tilde{u}\in \wos$ satisfies \eqref{weak1} weakly. We divide the rest of the proof into several steps. 

\noi \textbf{Step 1:} In this step, we claim that $\{ u_n - \tilde{u}\}$ is a (PS) sequence for $I_{\mu,0,\alpha}$ at level $\eta -I_{\mu,a,\alpha}(\tilde{u})$. Set $\tilde{u}_n = u_n - \tilde{u}$. Using Lemma \ref{convergence-BL} and $\tilde{u}_n \rightharpoonup 0$ in $\wos$, we get
\begin{align*}
    I_{\mu,0,\alpha}(\tilde{u}_n) & = \frac{1}{p} \norm{\tilde{u}_n}_{\wps}^p - \frac{\mu}{p} \int_{\Omega} \frac{\abs{\tilde{u}_n}^p}{\abs{x}^{sp}} \dx - \frac{1}{p^*_s(\alpha)} \int_{\Omega} \frac{\abs{\tilde{u}_n}^{p^*_s(\alpha)}}{\abs{x}^{\alpha}} \dx \\
    & = \frac{1}{p} \left( \norm{u_n}_{\wps}^p - \norm{\tilde{u}}_{\wps}^p \right) - \frac{\mu}{p} \left( \int_{\Omega} \frac{\abs{u_n}^p - \abs{\tilde{u}}^p}{\abs{x}^{sp}} \dx \right) - \frac{1}{p^*_s(\alpha)} \left( \int_{\Omega}  \frac{\abs{u_n}^{p^*_s(\alpha)} - \abs{\tilde{u}}^{p^*_s(\alpha)}}{\abs{x}^{\alpha}} \dx \right) \\
    & + \int_{\Omega} a(x) \left( \abs{u_n}^p - \abs{\tilde{u}}^p \right) \dx  + o_n(1) = I_{\mu,a,\alpha}(u_n) - I_{\mu,a,\alpha}(\tilde{u}) + o_n(1).
\end{align*}
The second identity follows using Lemma \ref{compact-embedding}.
Hence $I_{\mu,0,\alpha}(\tilde{u}_n) \ra \eta - I_{\mu,a,\alpha}(\tilde{u})$ as $n \ra \infty$. Further, for $\phi \in \wos$, using Remark \ref{convergence-integrals-remark} we have 
\begin{align*}
    \prescript{}{(\wps)'}{\langle} I_{\mu,0,\alpha}'(\tilde{u}_n), \phi {\rangle}_{\wps} = \mathcal{A}(\tilde{u}_n , \phi) - \mu \int_{\Omega} \frac{\abs{ \tilde{u}_n }^{p-2} \tilde{u}_n}{\abs{x}^{sp}} \phi\,\dx - \int_{\Omega} \frac{\abs{\tilde{u}_n}^{p^*_s(\alpha) -2} \tilde{u}_n}{\abs{x}^{\alpha}} \phi \,\dx = o_n(1).
\end{align*}
Thus, the claim holds.

\noi \textbf{Step 2:} Suppose $u_n \ra \tilde{u}$ in $\wos$. From the continuity of $I_{\mu,a,\alpha}$, we get $\eta = I_{\mu,a,\alpha}(\tilde{u})$, and Theorem \ref{PS-decomposition} holds for $k=0$. So, from now onward we assume that $u_n \not\ra \tilde{u}$ in $\wos$. 
In view of Step 1, $\prescript{}{(\wps)'}{\langle} I_{\mu,0,\alpha}'(\tilde{u}_n), \tilde{u}_n {\rangle}_{\wps} \ra 0$, which implies 
\begin{align}\label{del-0}
    0 < c \le C_{\text{eqiv}} \norm{\tilde{u}_n}_{\wps}^p \le  \norm{\tilde{u}_n}^p_{\wps} - \mu \int_{\Omega} \frac{\abs{\tilde{u}_n}^p}{\abs{x}^{sp}} \dx = \int_{\Omega} \frac{\abs{\tilde{u}_n}^{p^*_s(\alpha)}}{\abs{x}^{\alpha}} \dx + o_n(1).
\end{align}
In view of \eqref{del-0}, there exists $\delta_1>0$ such that 
\begin{align*}
    \inf_{n \in \N} \int_{\rd}  \frac{\abs{\tilde{u}_n}^{p^*_s(\alpha)}}{\abs{x}^{\alpha}} \dx = \delta_1.
\end{align*}
We take $0< \de< \de_1$ and consider the Levy concentration function
\begin{align*}
    Q_n(r) \coloneqq \int_{B(0,r)} \frac{\abs{\tilde{u}_n}^{p^*_s(\alpha)}}{\abs{x}^{\alpha}} \dx.
\end{align*}
Observe that $Q_n(0) =0$ and $Q_n(\infty) > \delta$. Further, $Q_n$ is continuous on $\R^+$ (see \cite[Lemma 3.1]{Brasco-2016}). Hence, there exists $\{ r_n \} \subset \R^+$ such that  
\begin{align}\label{int-1}
Q_n(r_n) = \int_{B(0, r_n)} \frac{\abs{\tilde{u}_n}^{p^*_s(\alpha)}}{\abs{x}^{\alpha}} \dx = \delta. 
\end{align}
If $r_n \ge \text{diam}(\Omega)$, then 
\begin{align*}
  \delta =  Q_n(r_n) = \int_{\Omega} \frac{\abs{\tilde{u}_n}^{p^*_s(\alpha)}}{\abs{x}^{\alpha}} \dx = \int_{\rd} \frac{\abs{\tilde{u}_n}^{p^*_s(\alpha)}}{\abs{x}^{\alpha}} \dx > \delta,
\end{align*}
a contradiction. Therefore, $r_n < \text{diam}(\Omega)$ for every $n \in \N$, i.e., the sequence $\{ r_n \}$ is bounded. Let $r_n \ra r_0$ in $\R^+$.  
We set 
\begin{align*}
\hat{u}_n(z) \coloneqq r_n^{\frac{d-sp}{p}} \tilde{u}_n(r_n z ), \text{ for } z \in \frac{\Omega}{r_n}. 
\end{align*}
Using the change of variable and \eqref{int-1}, 
\begin{align}\label{int-1.5}
\int_{B(0, 1)} \frac{\abs{\hat{u}_n}^{p^*_s(\alpha)}}{\abs{x}^{\alpha}} \dx = \delta. 
 \end{align}  
By observing the fact that $\norm{\tilde{u}_n}_{\wps} = \norm{\hat{u}_n}_{\wps}$, the sequence $\{ \hat{u}_n\}$ is bounded on $\wps$.  By the reflexivity of $\wps$, let $\hat{u}_n \rightharpoonup \hat{u}$ in $\wps$. Now, the following steps are based on several cases depending on the value of $\alpha$.  

\noi \textbf{Step 3:} In this step, we first assume $\al>0$ and show that $\hat{u} \neq 0$. On the contrary, suppose $\al>0$ and $\hat{u} = 0$. Consider $\phi \in \cc(B(0,1))$ with $0 \le \phi \le 1$. 
Set
\begin{align*}
    \phi_n(z) \coloneqq \phi\left(\frac{z}{r_n}\right) \tilde{u}_n(z), \text{ for } z \in \rd. 
\end{align*}
Note that $\text{supp}(\phi_n) \subset B(0, r_n)$. Since $\{ \tilde{u}_n \}$ is a (PS) sequence of $I_{\mu,0,\al}$, we have  
\begin{align}\label{int-2}
  \mathcal{A}(\tilde{u}_n, \phi_n) = \mu \int_{\rd} \frac{\abs{\tilde{u}_n}^{p-2} \tilde{u}_n}{\abs{x}^{sp}} \phi_n \dx + \int_{\rd} \frac{\abs{\tilde{u}_n}^{p^*_s(\al)-2} \tilde{u}_n}{\abs{x}^{\al}} \phi_n \dx + o_n(1).
\end{align}
Now we estimate $\mathcal{A}(\tilde{u}_n, \phi_n)$. Using the change of variable $\bar{x}_n = \frac{x}{r_n}, \bar{y}_n = \frac{y}{r_n}$, we write 
\begin{align*}
    &\mathcal{A}(\tilde{u}_n, \phi_n) \\
    &= \iint\limits_{\rd \times \rd} \frac{\abs{\tilde{u}_n(x) - \tilde{u}_n(y)}^{p-2}(\tilde{u}_n(x) - \tilde{u}_n(y)) \left( \phi(\frac{x}{r_n}) \tilde{u}_n(x) -  \phi(\frac{y}{r_n}) \tilde{u}_n(y) \right)}{\abs{x-y}^{d+sp}} \dxy \\
    &= r_n^{d-sp} \iint\limits_{\rd \times \rd} \frac{\abs{\tilde{u}_n(r_n \bar{x}_n) - \tilde{u}_n(r_n \bar{y}_n)}^{p-2}(\tilde{u}_n(r_n \bar{x}_n) - \tilde{u}_n(r_n \bar{y}_n))}{\abs{\bar{x}_n-\bar{y}_n}^{d+sp}} \\
    & \quad \quad \quad \quad \left( \phi(\bar{x}_n)  \tilde{u}_n(r_n \bar{x}_n) -  \phi(\bar{y}_n) \tilde{u}_n(r_n \bar{y}_n) \right) \dxnyn \\
    & = \iint\limits_{\rd \times \rd} \frac{\abs{\hat{u}_n(\bar{x}_n) - \hat{u}_n(\bar{y}_n)}^{p-2}(\hat{u}_n(\bar{x}_n) - \hat{u}_n(\bar{y}_n)) \left( \phi(\bar{x}_n) \hat{u}_n(\bar{x}_n) - \phi(\bar{y}_n) \hat{u}_n(\bar{y}_n) \right)}{\abs{\bar{x}_n-\bar{y}_n}^{d+sp}} \dxnyn.
\end{align*}
Applying the H\"{o}lder's inequality with the conjugate pair $(\frac{1}{p}, \frac{1}{p'})$,
\begin{align}\label{PS-decompo-1.1}
    \left| \mathcal{A}(\tilde{u}_n, \phi_n) \right| \le \norm{\hat{u}_n}_{\wps}^{p-1} \left( \iint\limits_{\rd \times \rd} \frac{\abs{\phi(x)\hat{u}_n(x) - \phi(y)\hat{u}_n(y)}^p}{\abs{x-y}^{d+sp}} \dxy \right)^{\frac{1}{p}}.
\end{align}
Now we proceed to estimate the right-hand side integral of \eqref{PS-decompo-1.1}. We split 
\begin{align*}
    &\iint\limits_{\rd \times \rd} \frac{\abs{\phi(x)\hat{u}_n(x) - \phi(y)\hat{u}_n(y)}^p}{\abs{x-y}^{d+sp}} \dxy \\
    & = \left( \iint\limits_{B(0,1) \times B(0,1)} + 2 \iint\limits_{B(0,1) \times B(0,1)^c} + \iint\limits_{B(0,1)^c \times B(0,1)^c} \right) \frac{\abs{\phi(x)\hat{u}_n(x) - \phi(y)\hat{u}_n(y)}^p}{\abs{x-y}^{d+sp}} \dxy \\
    &:= I_1 + I_2 + I_3.
\end{align*}
Clearly $I_3 = 0$ as $\text{supp}(\phi) \subset B(0,1)$. We now show that $I_1$ is finite. For that
\begin{align*}
    &\iint\limits_{B(0,1) \times B(0,1)} \frac{\abs{\phi(x)\hat{u}_n(x) - \phi(y)\hat{u}_n(y)}^p}{\abs{x-y}^{d+sp}} \dxy \\
    &\le 2^{p-1} \iint\limits_{B(0,1) \times B(0,1)} \left( \abs{\hat{u}_n(x)}^p  \frac{\abs{\phi(x) - \phi(y)}^p}{\abs{x-y}^{d+sp}} + \abs{\phi(y)}^p \frac{\abs{ \hat{u}_n(x)- \hat{u}_n(y) }^p}{\abs{x-y}^{d+sp}} \right) \dxy,
\end{align*}
where 
\begin{align*}
    \iint\limits_{B(0,1) \times B(0,1)} \abs{\phi(y)}^p \frac{\abs{ \hat{u}_n(x)- \hat{u}_n(y) }^p}{\abs{x-y}^{d+sp}} \dxy \le \norm{\phi}_{L^{\infty}(\rd)}^p \norm{ \hat{u}_n }_{\wps}^p \le C.
\end{align*}
Moreover, using $\abs{\phi(x) - \phi(y)} \le C\abs{x-y}$, we see that 
\begin{align*}
    \iint\limits_{B(0,1) \times B(0,1)} \abs{\hat{u}_n(x)}^p  \frac{\abs{\phi(x) - \phi(y)}^p}{\abs{x-y}^{d+sp}} \dxy & \le C^p \iint\limits_{B(0,1) \times B(0,1)}  \frac{\abs{\hat{u}_n(x)}^p}{\abs{x-y}^{d+sp-p}} \dxy \\
    & \le C^p \int_{B(0,1)} \left( \int_{B(0,2)} \frac{\dz}{\abs{z}^{d+sp-p}} \right) \abs{\hat{u}_n(x)}^p \dx \le C.
\end{align*}
This proves the finiteness of the integral. Moreover, using the compact embeddings of $\wps \hookrightarrow L^{p}_{loc}(\rd)$ and $\hat{u}=0$, we have $\hat{u}_n(x) \ra 0$ pointwise a.e. $x \in B(0,1)$. This implies $\abs{\phi(x)\hat{u}_n(x) - \phi(y)\hat{u}_n(y)} \ra 0$ pointwise a.e. $x,y \in B(0,1)$. Hence, applying Vitali's convergence theorem, we conclude that
\begin{align*}
    \lim_{n \ra \infty} \iint\limits_{B(0,1) \times B(0,1)} \frac{\abs{\phi(x)\hat{u}_n(x) - \phi(y)\hat{u}_n(y)}^p}{\abs{x-y}^{d+sp}} \dxy = 0.
\end{align*}
The above convergence yields $I_1 = o_n(1)$. We are left to show $I_2 = o_n(1)$. Observe  that
\begin{align*}
    I_2 = \iint\limits_{B(0,1) \times B(0,1)^c} \frac{\abs{\phi(x)\hat{u}_n(x)}^p}{\abs{x-y}^{d+sp}} \dxy \le \norm{\phi}_{L^{\infty}(\rd)}^p \iint\limits_{B(0,1) \times B(0,1)^c} \frac{\abs{\hat{u}_n(x)}^p}{\abs{y-x}^{d+sp}} \dxy,
\end{align*}
where using the change of variable, we estimate the last integral as
\begin{align*}
    \int_{B(0,1)} \abs{\hat{u}_n(x)}^p  \left( \int_{\abs{z}>1} \frac{\dz}{\abs{z}^{d+sp}} \right) \dx \le C(d,s,p) \int_{B(0,1)} \abs{\hat{u}_n(x)}^p \dx = o_n(1), 
\end{align*}
where the first inequality holds as 
\begin{align*}
    \int_{\{\abs{z}>1\}} \frac{\dz}{\abs{z}^{d+sp}} \le C, \text{ and } \int_{B(0,1)} \abs{\hat{u}_n(x)}^p \dx =o_n(1),
\end{align*}
using the compact embedding $\wps \hookrightarrow L^p_{loc}(\rd)$ and $\hat{u}=0$. Hence $I_2=o_n(1)$. Accumulating all the estimates, we get 
\begin{align}\label{zero norm}
 \norm{\phi \hat{u}_n}_{\wps} = o_n(1), \text{ whenever } \hat{u}=0. 
\end{align}
From \eqref{PS-decompo-1.1}, $ \mathcal{A}(\tilde{u}_n, \phi_n) = o_n(1)$. 
Now we show that 
\begin{align}\label{int-3}
    \int_{\Omega} \frac{\abs{\tilde{u}_n}^{p-2} \tilde{u}_n}{\abs{x}^{sp}} \phi_n \dx = o_n(1). 
\end{align}
Using the change of variable and \eqref{HS}, we write 
\begin{align*}
    \int_{\rd} \frac{\abs{\tilde{u}_n}^{p-2} \tilde{u}_n}{\abs{x}^{sp}} \phi_n \dx = \int_{\rd} \frac{\abs{\hat{u}_n}^{p}}{\abs{x}^{sp}} \phi \dx = \int_{B(0,1)} \frac{\abs{\hat{u}_n}^{p}}{\abs{x}^{sp}} \phi \dx \le C \norm{\phi}_{L^{\infty}(\rd)} \norm{\hat{u}_n }^p_{\wps} \le C.
\end{align*}
Further, using the compact embedding $\wps \hookrightarrow L^p_{loc}(\rd)$ and $\hat{u}=0$, we see that $\frac{\abs{\hat{u}_n}^{p}}{\abs{x}^{sp}} \phi(x) \ra 0$ a.e. in $B(0,1)$. Hence, again using Vitali's convergence theorem, 
\begin{align*}
    \lim_{n \ra \infty} \int_{B(0,1)} \frac{\abs{\hat{u}_n}^{p}}{\abs{x}^{sp}} \phi \dx=0.
\end{align*}
Hence, in view of  \eqref{int-2}, we have 
\begin{align*}
o_n(1) = \int_{\rd} \frac{\abs{\tilde{u}_n}^{p^*_s(\al)-2} \tilde{u}_n}{\abs{x}^{\al}} \phi_n \dx = \int_{\rd} \frac{\abs{\hat{u}_n}^{p^*_s(\al)}}{\abs{x}^{\al}} \phi \dx,   
\end{align*}
where the last identity holds using the change of variable. Since $\phi \in \cc(B(0,1))$ is arbitrary,  for any $r\in (0,1)$ we can choose $\phi \equiv 1$ on $B_r$. Therefore,
\begin{align*}
 o_n(1) =  \int_{B_r}  \frac{\abs{\hat{u}_n}^{p^*_s(\al)}}{\abs{x}^{\al}} \dx,\text{ for any }0<r<1,
\end{align*}
which contradicts \eqref{int-1.5}. Thus, we conclude $\hat{u} \neq 0$.

For $\al=0$, we distinguish two cases: $\hat{u} =0$, and $\hat{u} \neq 0$. To be concise, in the remainder of this step, we consider $\al \in [0,sp)$ and $\hat{u} \neq 0$.

Suppose $r_0 >0$. Since $\hat{u} \neq 0$, we can choose $R>>1$ large enough so that $\norm{ \hat{u}}_{L^p(B(0,R))} >0$. Now using the compact embedding of $\wps \hookrightarrow L_{loc}^p(\rd)$ and applying the change of variable, we see that
\begin{align}\label{limit-1}
    0 < \norm{ \hat{u}}_{L^p(B(0,R))} = \norm{ \hat{u}_n }_{L^p(B(0,R))} +o_n(1) = r_n^{-s} \norm{\tilde{u}_n}_{L^p(B(0,r_nR)} + o_n(1).
\end{align}
Further, since $r_n \ra r_0$, there exists $R_1>0$ such that $B(0,r_nR) \subset B(0,R_1)$. Now again using the compact embedding of $\wps \hookrightarrow L_{loc}^p(\rd)$, 
\begin{align*}
   \lim_{n \ra \infty} r_n^{-s} \norm{\tilde{u}_n}_{L^p\left(B(0,r_nR)\right)} \le r_0^{-s} \lim_{n \ra \infty} \norm{\tilde{u}_n}_{L^p\left(B(0,R_1)\right)} = 0,
\end{align*}
which contradicts \eqref{limit-1}. Therefore, $r_n \ra 0$ as $n \ra \infty$. As a consequence, $|\rd \setminus \frac{\Omega}{r_n}| \ra 0$ as $n \ra \infty$. 

Now, we show that the non-zero weak limit $\hat{u}$ weakly solves the following limiting equation
\begin{align}\label{limiting problem-1}
    (-\Delta_p)^s u -\mu\dfrac{\abs{u}^{p-2}u}{|x|^{sp}}=\frac{|u|^{p^*_s(\al)-2}u}{\abs{x}^{\al}} \;\mbox{ in }\,\mathbb{R}^d. 
\end{align}
Take $\phi \in \wps$.  From Step 2, since $\hat{u}_n \rightharpoonup \hat{u}$, using Lemma \ref{convergence-integrals}-(iii), we get $\mathcal{A}(\hat{u}_n, \phi) \ra  \mathcal{A}(\hat{u}, \phi)$ as $n \ra \infty$. For $n \in \mathbb{N}$, we set 
\begin{align*}
    \tilde{\phi}_n(z) = r_n^{- \frac{d-sp}{p}} \phi \left(\frac{z}{r_n} \right), \text{ for } z \in \rd.
\end{align*}
Note that $\norm{\tilde{\phi}_n}_{\wps} = \norm{\phi}_{\wps}$. Next, using the change of variable $\overline{x}_n = r_n x,\,\overline{y}_n = r_n y$, 
\begin{align}\label{invariant-1}
    & \mathcal{A}(\hat{u}_n, \phi) = r_n^{\frac{d-sp}{p'}} \iint\limits_{\rd \times \rd} \frac{\abs{\tilde{u}_n(r_n x) - \tilde{u}_n(r_n y)}^{p-2} (\tilde{u}_n(r_n x) - \tilde{u}_n(r_n y)) (\phi(x) - \phi(y))}{\abs{x-y}^{d+sp}}\dx\dy \no \\
    & = r_n^{-\frac{d-sp}{p}} \iint\limits_{\rd \times \rd} \frac{\abs{\tilde{u}_n(r_n x) - \tilde{u}_n(r_n y)}^{p-2} (\tilde{u}_n(r_n x) - \tilde{u}_n(r_n y))(\phi(x) - \phi(y))}{\abs{r_n x-r_n y}^{d+sp}}  \dx\dy  \no \\
    & = r_n^{- \frac{d-sp}{p}} \iint\limits_{\rd \times \rd} \frac{\abs{\tilde{u}_n(\overline{x}_n) - \tilde{u}_n(\overline{y}_n)}^{p-2} (\tilde{u}_n(\overline{x}_n) - \tilde{u}_n(\overline{y}_n)) \left(\phi \left(\frac{\overline{x}_n}{r_n} \right) - \phi \left(\frac{\overline{y}_n}{r_n} \right) \right)}{\abs{\overline{x}_n - \overline{y}_n}^{d+sp}} \d \overline{x}_n \d \overline{y}_n \no \\
    &= \mathcal{A}(\tilde{u}_n, \tilde{\phi}_n).
\end{align}
Similarly, for $\al \in [0, sp]$, we also have  
\begin{align}\label{invariant-2}
    \int_{\rd} \frac{\abs{\hat{u}_n}^{p^*_s(\alpha)-2} \hat{u}_n}{\abs{x}^{\alpha}} \phi \dx = \int_{\rd} \frac{\abs{\tilde{u}_n}^{p^*_s(\alpha)-2} \tilde{u}_n}{\abs{x}^{\alpha}} \tilde{\phi}_n \dx.  
\end{align}
Now using $\prescript{}{(\wps)'}{\langle} I_{\mu,0,\alpha}'(\tilde{u}_n), \tilde{\phi}_n {\rangle}_{\wps} \ra 0$, \eqref{invariant-1}, and \eqref{invariant-2}, we see that 
\begin{align}\label{limit}
    &\mathcal{A}(\hat{u}_n, \phi) - \mu \int_{\rd} \frac{\abs{\hat{u}_n}^{p-2} \hat{u}_n}{\abs{x}^{sp}} \phi \dx = \mathcal{A}(\tilde{u}_n, \tilde{\phi}_n) - \mu \int_{\rd} \frac{\abs{\tilde{u}_n}^{p-2} \tilde{u}_n}{\abs{x}^{sp}} \tilde{\phi}_n \dx \no \\
    &=\int_{\rd} a(x) \abs{\tilde{u}_n}^{p-2} \tilde{u}_n \tilde{\phi}_n \dx + \int_{\rd} \frac{\abs{\tilde{u}_n}^{p^*_s(\alpha)-2}\tilde{u}_n}{\abs{x}^{\alpha}} \tilde{\phi}_n \dx + o_n(1) \no \\
    &=\int_{\rd} \frac{\abs{\hat{u}_n}^{p^*_s(\alpha)-2} \hat{u}_n}{\abs{x}^{\alpha}} \phi \dx + o_n(1),
\end{align}
where the last line is obtained again by using the change of variable and the fact that 
\begin{align*}
    \left| \int_{\rd} a(x) \abs{\tilde{u}_n}^{p -2} \tilde{u}_n \tilde{\phi}_n \dx \right| & \leq \left( \int_{\Omega} \abs{a(x)} \abs{\tilde{u}_n}^p \dx \right)^{\frac{p-1}{p}} \left( \int_{\Omega} \abs{a(x)} \abs{\tilde{\phi}_n}^p \dx \right)^{\frac{1}{p}} \no \\
    & \le C \norm{a}_{L^{\frac{d-\alpha}{sp-\alpha}}(\Omega)} \norm{\phi}_{\wps}^{\frac{d-\alpha}{d-sp}} \left( \int_{\Omega} \abs{a(x)} \abs{\tilde{u}_n}^p \dx \right)^{\frac{p-1}{p}} = o_n(1),
\end{align*}
where $o_n(1)$ comes using Lemma \ref{compact-embedding}. Now taking $n \ra \infty$ in \eqref{limit}, and applying Lemma \ref{convergence-integrals}-(ii), we see that $\hat{u}$ weakly solves \eqref{limiting problem-1}. 

Next, we set 
\begin{align*}
    w_n(z) = \tilde{u}_n(z) - r_n^{-\frac{d-sp}{p}} \hat{u} \left( \frac{z}{r_n} \right) \text{ and } \tilde{w}_n(z) = r_n^{\frac{d-sp}{p}} w_n(r_n z), \text{ for } z \in \rd.
\end{align*}
Note that  
\begin{align*}
    \norm{w_n}_{\wps} = \norm{\tilde{w}_n}_{\wps} \text{ and } \int_{\rd} \frac{\abs{w_n}^{p^*_s(\alpha)}}{\abs{x}^{\alpha}} \dx = \int_{\rd} \frac{\abs{\tilde{w}_n}^{p^*_s(\alpha)}}{\abs{x}^{\alpha}} \dx, \text{ for } \alpha \in [0,sp]. 
\end{align*}
Observe that $\tilde{w}_n = \hat{u}_n - \hat{u}$. Hence the norm invariance gives $\norm{ w_n }_{\wps} = \norm{ \tilde{w}_n }_{\wps} = \norm{ \hat{u}_n - \hat{u} }_{\wps}$. Applying Lemma \ref{convergence-BL}, we see that 
\begin{align*}
    \norm{ w_n }_{\wps}^p = \norm{ \hat{u}_n }_{\wps}^p - \norm{ \hat{u} }_{\wps}^p + o_n(1) = \norm{ \tilde{u}_n }_{\wps}^p - \norm{ \hat{u} }_{\wps}^p + o_n(1).
\end{align*}
We show that $\{w_n\}$ is a (PS) sequence of $I_{\mu,0,\alpha}$ at level $\eta - I_{\mu,a,\alpha}(\tilde{u}) - I_{\mu,0,\alpha}(\hat{u})$. 
Applying Lemma \ref{convergence-BL}, and the fact that $I_{\mu,0,\alpha}(\tilde{u}_n) = \eta -I_{\mu,a,\alpha}(\tilde{u}) + o_n(1)$, we see that 
\begin{align*}
    I_{\mu,0,\alpha} (w_n) & = \frac{1}{p}\norm{w_n}_{\wps}^p - \frac{\mu}{p} \int_{\rd}\frac{\abs{w_n}^p}{\abs{x}^{sp}} \dx - \frac{1}{p^*_s(\alpha)} \int_{\rd} \frac{\abs{w_n}^{p^*_s(\alpha)}}{\abs{x}^{\alpha}} \dx \\
    & = \frac{1}{p} \left( \norm{\hat{u}_n}_{\wps}^p - \norm{\hat{u}}_{\wps}^p \right) - \frac{\mu}{p} \left( \int_{\rd}\frac{\abs{\hat{u}_n}^p}{\abs{x}^{sp}} \dx - \int_{\rd}\frac{\abs{\hat{u}}^p}{\abs{x}^{sp}} \dx \right) \\
    & - \frac{1}{p^*_s(\alpha)} \left( \int_{\rd} \frac{\abs{\hat{u}_n}^{p^*_s(\alpha)}}{\abs{x}^{\alpha}} \dx - \int_{\rd} \frac{\abs{\hat{u}}^{p^*_s(\alpha)}}{\abs{x}^{\alpha}} \dx \right) + o_n(1) \\
    & = \frac{1}{p} \left( \norm{\tilde{u}_n}_{\wps}^p - \norm{\hat{u}}_{\wps}^p \right) - \frac{\mu}{p} \left( \int_{\rd}\frac{\abs{\tilde{u}_n}^p}{\abs{x}^{sp}} \dx - \int_{\rd}\frac{\abs{\hat{u}}^p}{\abs{x}^{sp}} \dx \right) \\
    & - \frac{1}{p^*_s(\alpha)} \left( \int_{\rd} \frac{\abs{\tilde{u}_n}^{p^*_s(\alpha)}}{\abs{x}^{\alpha}} \dx - \int_{\rd} \frac{\abs{\hat{u}}^{p^*_s(\alpha)}}{\abs{x}^{\alpha}} \dx \right) + o_n(1) \\
    &= I_{\mu,0,\alpha}(\tilde{u}_n) - I_{\mu,0,\alpha}(\hat{u}) + o_n(1) = \eta - I_{\mu,a,\alpha}(\tilde{u}) - I_{\mu,0,\alpha}(\hat{u}) + o_n(1).
\end{align*}
Next, we show $\prescript{}{(\wps)'}{\langle} I_{\mu,0,\alpha}'(w_n), \phi {\rangle}_{\wps} \ra 0$ for every $\phi \in \wps$. By the density argument, it is enough to show $\prescript{}{(\wps)'}{\langle} I_{\mu,0,\alpha}'(w_n), \phi {\rangle}_{\wps} \ra 0$ for every $\phi \in \cc (\rd)$. We define 
\begin{align*}
    \hat{\phi}_n(z) =  r_n^{\frac{d-sp}{p}} \phi(r_n z), \text{ for } z \in \rd.
\end{align*}
Since $\norm{ \hat{\phi}_n }_{\wps} = \norm{ \phi }_{\wps}$, the sequence $\{ \hat{\phi}_n \}$ is bounded in $\wps$, and  and up to a subsequence $\hat{\phi}_n \rightharpoonup u_1$ in $\wps$. Since $r_n\to 0$, $\hat{\phi}_n\to 0$ uniformly in $\R^{d}$ and $\{ \hat{\phi}_n\}$ is bounded in $\wps$ enforce it has a weak limit (up to a subsequence) in $\wps$ which must coincide with $0$. Therefore, $u_1=0$ a.e. in $\rd$. Now using the change of variable, 
\begin{align}\label{PS-sublevel}
    \prescript{}{(\wps)'}{\langle} I_{\mu,0,\alpha}'(w_n), \phi {\rangle}_{\wps} & = \mathcal{A}(w_n, \phi) - \mu \int_{\rd} \frac{\abs{w_n}^{p-2} w_n}{\abs{x}^{sp}} \phi \dx - \int_{\rd} \frac{\abs{w_n}^{p^*_s(\alpha)-2} w_n}{\abs{x}^{\alpha}} \phi \dx \no \\
    & = \mathcal{A}(\tilde{w}_n, \hat{\phi}_n) - \mu \int_{\rd} \frac{\abs{\tilde{w}_n}^{p-2} \tilde{w}_n}{\abs{x}^{sp}} \hat{\phi}_n \dx - \int_{\rd} \frac{\abs{\tilde{w}_n}^{p^*_s(\alpha)-2} \tilde{w}_n}{\abs{x}^{\alpha}} \hat{\phi}_n \dx.
\end{align}
Using Lemma \ref{convergence-BL}-(iv) and using H\"{o}lder's inequality with the conjugate pair $(p, p')$ and further using $\norm{\hat{\phi}_n}_{\wps} = \norm{\phi}_{\wps}$ we get $ \mathcal{A}(\tilde{w}_n, \hat{\phi}_n) -  \mathcal{A}(\hat{u}_n, \hat{\phi}_n) + \mathcal{A}(\hat{u}, \hat{\phi}_n) = o_n(1)$. Further, the change of variable yields $\mathcal{A} (\tilde{w}_n, \hat{\phi}_n) -  \mathcal{A}(\tilde{u}_n, \phi) + \mathcal{A}(\hat{u}, \hat{\phi}_n) = o_n(1)$. Now using the fact that $\tilde{u}_n \rightharpoonup 0$ and $\hat{\phi}_n \rightharpoonup 0$ in $\wps$, applying Lemma \ref{convergence-integrals}-(iii), we get $\mathcal{A}(\tilde{u}_n, \phi) = o_n(1)$ and $\mathcal{A}(\hat{u}, \hat{\phi}_n) = o_n(1)$. Therefore, $\mathcal{A}(\tilde{w}_n, \hat{\phi}_n) = o_n(1)$. Further, for $\al \in [0, sp]$, using Lemma \ref{convergence-BL}-(v), H\"{o}lder's inequality with the conjugate pair $(p^*_s(\alpha), (p^*_s(\alpha))')$, the embedding $\wps \hookrightarrow L^{p^*_s(\alpha)}(\rd, \abs{x}^{- \alpha})$, and $\norm{\hat{\phi}_n}_{\wps} = \norm{\phi}_{\wps}$, we get
\begin{align*}
   \int_{\rd} \frac{\abs{\tilde{w}_n}^{p^*_s(\alpha)-2}\tilde{w}_n}{\abs{x}^{\alpha}} \hat{\phi}_n \dx - \int_{\rd} \frac{\abs{\hat{u}_n}^{p^*_s(\alpha)-2}\hat{u}_n}{\abs{x}^{\alpha}}  \hat{\phi}_n \dx
   =\int_{\rd} \frac{\abs{\hat{u}}^{p^*_s(\alpha)-2}\hat{u}}{\abs{x}^{\alpha}} \hat{\phi}_n \dx + o_n(1).
\end{align*}
Again the change of variable yields, 
\begin{align*}
    \int_{\rd} \frac{\abs{\tilde{w}_n}^{p^*_s(\alpha)-2}\tilde{w}_n}{\abs{x}^{\alpha}} \hat{\phi}_n\dx - \int_{\rd} \frac{\abs{\tilde{u}_n}^{p^*_s(\alpha)-2}\tilde{u}_n}{\abs{x}^{\alpha}} \phi \dx=\int_{\rd} \frac{\abs{\hat{u}}^{p^*_s(\alpha)-2} \hat{u}}{\abs{x}^{\alpha}} \hat{\phi}_n \dx + o_n(1).
\end{align*}
Now, in view of the above identity, using Lemma \ref{convergence-integrals}-(ii) and the fact that $\hat{\phi}_n \rightharpoonup 0$ in $\wps$, we obtain
\begin{align*}
    \int_{\rd} \frac{\abs{\tilde{w}_n}^{p^*_s(\alpha)-2}\tilde{w}_n}{\abs{x}^{\alpha}} \hat{\phi}_n\dx = o_n(1). 
\end{align*}
From \eqref{PS-sublevel}, we finally get $\prescript{}{(\wps)'}{\langle} I_{\mu,0,\alpha}'(w_n), \phi {\rangle}_{\wps} = o_n(1)$ for every $\phi \in \cc (\rd)$. Thus, $\{w_n\}$ becomes a (PS) sequence of $I_{\mu,0,\alpha}$ at level $\eta - I_{\mu,a,\alpha}(\tilde{u}) - I_{\mu,0,\alpha}(\hat{u})$.

\noi \textbf{Step 4:} In this step, we consider the case $\al=0$ and $\hat{u} =0$. In view of \eqref{zero norm}, we have $\norm{\phi \hat{u}_n}_{\wps} = o_n(1)$ for $\phi \in \cc(B(0,1))$. The Sobolev embedding $\wps \hookrightarrow L^{p^*_s}(\rd)$ infers that $ \norm{\phi \hat{u}_n}_{L^{p^*_s}(B(0,1))} = o_n(1)$. Since $\phi \in \cc(B(0,1))$ is arbitrary, for any $r\in (0,1)$ we can choose $\phi \equiv 1$ on $B(0,r)$. Therefore, 
\begin{align*}
    \int_{B(0,r)} \abs{\hat{u}_n}^{p^*_s} \dx = o_n(1), \text{ for every } 0<r<1. 
\end{align*}
Hence, in view of the concentration-compactness principle (see \cite[Theorem 1.1]{BSS2018}), there exists a bounded measure $\nu$ such that the following convergence hold in duality with $\mathcal{C}_b(\rd)$:
\begin{align}\label{cc-1}
    \abs{\hat{u}_n}^{p^*_s} \chi_{\overline{B(0,1)}}  \overset{\ast}{\rightharpoonup} \nu, \text{ where } \nu=\sum_{i \in I} \nu_i \delta_{x_i}, x_i \in \rd \text{ satisfies } |x_i| = 1, \text{ and } \nu_i = \nu(\{ x_i\}).
\end{align}
Further, 
\begin{align*}
   \limsup_{n \ra \infty} \int_{\rd} \abs{\hat{u}_n}^{p^*_s} \chi_{\overline{B(0,1)}} \dx = \nu(\rd),
\end{align*}
as $\nu_{\infty} = 0$. Now, since $\{ \hat{u}_n \}$ is bounded in $L^{p^*_s}(\rd)$, from \eqref{cc-1} and the above convergence, the index set $I$ is finite.  
Let $M= \max\{ \nu_i : i \in I\}$. Then $M< \infty$. Now, we define the Levy concentration function
\begin{align*}
    P_n(r) := \sup_{y \in \rd} \int_{B(y,r)} \abs{ \hat{u}_n }^{p^*_s} \dx.
\end{align*}
Note that, $P_n$ is continuous on $\R^+$ (see \cite[Lemma 3.1]{Brasco-2016}).
Take $\phi \in \mathcal{C}_b(\rd)$ with $\phi \equiv 1$ in $\overline{B(0,1)}$ and $\phi \equiv 0$ in $\rd \setminus B(0,2)$. In view of \eqref{cc-1},
\begin{align*}
    \int_{B(0,1)} \abs{\hat{u}_n}^{p^*_s} \dx = \int_{\rd} \abs{\hat{u}_n}^{p^*_s} \chi_{\overline{B(0,1)}} \phi(x) \dx = \sum_{i \in I} \nu_i \phi(x_i) + o_n(1) = \sum_{i \in I} \nu_i + o_n(1).
\end{align*}
Therefore, there exists $\tau \in (0,1)$ such that $P_n(\infty) > M \tau$ and $ P_n(r) > M \tau$ for each $r>0$ large enough.
Further, using \eqref{cc-1}, for every $r>0$, $\liminf_{n \ra \infty} P_n(r) \ge M\tau$. Also, $P_n(0)< M \tau$. 
These yield the existence of $\{s_n\} \subset \R^+$ and $\{y_n\} \subset \rd$ with $s_n \ra 0$ and $\abs{y_n}> \frac{1}{2}$ such that 
\begin{align}\label{L-1}
    M \tau = P_n(s_n) = \int_{B(y_n,s_n)} \abs{ \hat{u}_n }^{p^*_s} \dx.
\end{align}
Define 
\begin{align*}
    \hat{v}_n(z) := s_n^{\frac{d-sp}{p}} \hat{u}_n(s_n z + y_n), \text{ for } z \in \rd. 
\end{align*}
Observe that $\norm{ \hat{v}_n }_{\wps} = \norm{ \hat{u}_n }_{\wps}$. Hence the sequence $\{\hat{v}_n\}$ is bounded in $\wps$. By the reflexivity, $\hat{v}_n \rightharpoonup \hat{v}$ in $\wps$. If $\hat{v} =0$, then using similar set of arguments we can show that $\norm{\phi \hat{v}_n}_{\wps} = o_n(1)$ for every $\phi \in \cc(B(0,1))$, and then the Sobolev inequality yields 
\begin{align*}
    \int_{B(0,r)} \abs{\hat{v}_n}^{p^*_s} \dx = o_n(1), \text{ for every } 0<r<1. 
\end{align*}
On the other hand, in view of \eqref{L-1} we see that 
\begin{align*}
    \int_{B(0,1)} \abs{\hat{v}_n}^{p^*_s} \dx = M \tau,
\end{align*}
a contradiction. Thus, $\hat{v} \neq 0$. 
Define $R_n = r_n s_n$ and $z_n = r_n y_n$. Notice that
\begin{align*}
    \hat{v}_n(z) = R_n^{\frac{d-sp}{p}} \tilde{u}_n(R_n z + z_n), \text{ for } z \in \frac{\Omega - z_n}{R_n},
\end{align*}
where $R_n = o_n(1)$, $z_n \ra z_0 \in \rd$ or $\abs{z_n} \ra \infty$, and $\frac{R_n}{|z_n|} = \frac{s_n}{|y_n|} < 2 s_n =o_n(1)$.  
As a consequence, $\left| \rd \setminus \frac{\Omega - z_n}{R_n} \right| \ra 0$, as $n \ra \infty$.  

Now, we show that the non-zero weak limit $\hat{v}$ weakly solves the following limiting equation
\begin{align}\label{limiting problem-1.1}
    (-\Delta_p)^s u=|u|^{p^*_s-2}u \;\mbox{ in }\,\mathbb{R}^d. 
\end{align}
Take $\psi \in \wps$. For $n \in \mathbb{N}$, we set 
\begin{align*}
    \psi_n(z) = R_n^{- \frac{d-sp}{p}} \psi \left(\frac{z-z_n}{R_n} \right), \text{ for } z \in \rd.
\end{align*}
Observe that $\norm{\psi_n}_{\wps} = \norm{\psi}_{\wps}$. Using the change of variable $\overline{x}_n = R_nx+z_n,\,\overline{y}_n = R_n y+z_n$, we similarly get (as in Step 3), $\mathcal{A}(\hat{v}_n, \psi) = \mathcal{A}(\tilde{u}_n, \psi_n)$, and 
\begin{align*}
    \int_{\rd} \abs{\hat{v}_n}^{p^*_s-2}\hat{v}_n \psi \dx = \int_{\rd} \abs{\tilde{u}_n}^{p^*_s-2}\tilde{u}_n \psi_n \dx. 
\end{align*}
Hence using $\prescript{}{(\wps)'}{\langle} I_{\mu,0,0}'(\tilde{u}_n), \psi_n {\rangle}_{\wps} \ra 0$,
\begin{align}\label{estimate-1}
    &\mathcal{A}(\hat{v}_n, \psi) = \mu \int_{\Omega} \frac{\abs{\tilde{u}_n}^{p-2} \tilde{u}_n}{\abs{x}^{sp}} \psi_n \dx - \int_{\Omega} a(x) \abs{\tilde{u}_n}^{p-2} \tilde{u}_n \psi_n \dx + \int_{\rd} \abs{\hat{v}_n}^{p^*_s-2}\hat{v}_n \psi \dx + o_n(1).
\end{align}
Now using the change of variable $\overline{x}_n = \frac{x-z_n}{R_n}$, we see that  
\begin{align*}
    \int_{\rd} \frac{\abs{\tilde{u}_n}^{p-2} \tilde{u}_n}{\abs{x}^{sp}} \psi_n \dx = \frac{R_n^{\frac{dp-d+sp}{p}}}{R_n^{sp}} \int_{\rd} \frac{\abs{\tilde{u}_n}^{p-2} \tilde{u}_n}{\abs{ \overline{x}_n +\frac{z_n}{R_n}}^{sp}} \psi \d \overline{x}_n = \int_{\rd} \frac{\abs{\hat{v}_n}^{p-2} \hat{v}_n}{\abs{x + \frac{z_n}{R_n}}^{sp}} \psi \dx.
\end{align*}
Since $\frac{\abs{z_n}}{R_n} \ra \infty$, there exists $n_0 \in \N$ such that for all $n \ge n_0$, $\abs{x+\frac{z_n}{R_n}} \ge \abs{x}$, and hence  
\begin{align*}
   \left| \int_{\rd} \frac{\abs{\hat{v}_n}^{p-2} \hat{v}_n}{\abs{x + \frac{z_n}{R_n}}^{sp}} \psi \dx \right| \le \int_{\rd} \frac{\abs{\hat{v}_n}^{p-1}}{\abs{x + \frac{z_n}{R_n}}^{s(p-1)}} \frac{\abs{\psi}}{\abs{x}^s} \dx = o_n(1),
\end{align*}
since $\frac{\abs{\hat{v}_n}^{p-1}}{\abs{x + \frac{z_n}{R_n}}^{s(p-1)}} \rightharpoonup 0$ in $L^{p'}(\rd)$ and $\frac{\abs{\psi}}{\abs{x}^s} \in L^p(\rd)$ (using \eqref{HS}). Hence
\begin{align*}
    \int_{\rd} \frac{\abs{\tilde{u}_n}^{p-2} \tilde{u}_n}{\abs{x}^{sp}} \psi_n \dx = o_n(1), \text{ for every } \psi \in \wps.
\end{align*} 
Now, taking the limit as $n \ra \infty$ in \eqref{estimate-1}, and using Lemma \ref{compact-embedding} and Lemma \ref{convergence-integrals}, we get that $\hat{v}$ weakly solves \eqref{limiting problem-1.1}. 

Next, we set 
\begin{align*}
    h_n(z) = \tilde{u}_n(z) - R_n^{-\frac{d-sp}{p}} \hat{v} \left( \frac{z-z_n}{R_n} \right) \text{ and } \tilde{h}_n(z) = R_n^{\frac{d-sp}{p}} h_n(R_nz + z_n), \text{ for } z \in \rd.
\end{align*}
Note that $\norm{h_n}_{\wps} = \norm{\tilde{h}_n}_{\wps}$ and $\tilde{h}_n = \hat{v}_n - \hat{v}$. Hence the norm invariance gives $\norm{ h_n }_{\wps} = \norm{ \tilde{h}_n }_{\wps} = \norm{ \hat{v}_n - \hat{v} }_{\wps}$.  Applying Lemma \ref{convergence-BL}, we see that 
\begin{align*}
    \norm{ h_n }_{\wps}^p = \norm{ \hat{v}_n }_{\wps}^p - \norm{ \hat{v} }_{\wps}^p + o_n(1) = \norm{ \tilde{u}_n }_{\wps}^p - \norm{ \hat{v} }_{\wps}^p + o_n(1).
\end{align*}
We show that $\{h_n\}$ is a (PS) sequence of $I_{\mu,0,0}$ at level $\eta - I_{\mu,a,\alpha}(\tilde{u}) - I_{0,0,0}(\hat{v})$. 
Applying Lemma \ref{convergence-BL}, and the fact that $I_{\mu,0,0}(\tilde{u}_n) = \eta -I_{\mu,a,\alpha}(\tilde{u}) + o_n(1)$, we see that 
\begin{align*}
    I_{\mu,0,0} (h_n) & = \frac{1}{p}\norm{h_n}_{\wps}^p - \frac{\mu}{p} \int_{\rd}\frac{\abs{h_n}^p}{\abs{x}^{sp}} \dx - \frac{1}{p^*_s} \int_{\rd} \abs{h_n}^{p^*_s} \dx \\
    & = \frac{1}{p} \left( \norm{\hat{v}_n}_{\wps}^p - \norm{\hat{v}}_{\wps}^p \right) - \frac{\mu}{p} \int_{\rd}\frac{\abs{h_n}^p}{\abs{x}^{sp}} \dx - \frac{1}{p^*_s} \left( \int_{\rd} \abs{\hat{v}_n}^{p^*_s} \dx - \int_{\rd} \abs{\hat{v}}^{p^*_s} \dx \right) + o_n(1) \\
    & = \frac{1}{p} \left( \norm{\tilde{u}_n}_{\wps}^p - \norm{\hat{v}}_{\wps}^p \right) - \frac{\mu}{p} \int_{\rd}\frac{\abs{\tilde{u}_n}^p}{\abs{x}^{sp}} \dx - \frac{1}{p^*_s} \left( \int_{\rd} \abs{\tilde{u}_n}^{p^*_s} \dx - \int_{\rd} \abs{\hat{v}}^{p^*_s} \dx \right) + o_n(1) \\
    &= I_{\mu,0,0}(\tilde{u}_n) - I_{0,0,0}(\hat{v}) + o_n(1) = \eta - I_{\mu,a,\alpha}(\tilde{u}) - I_{0,0,0}(\hat{v}) + o_n(1),
\end{align*}
where the third equality follows using Lemma \ref{convergence-BL}-(ii) and the fact that 
\begin{align*}
   \int_{\rd}\frac{\abs{h_n - \tilde{u}_n}^p}{\abs{x}^{sp}} \dx = \int_{\rd}  \frac{\abs{\hat{v}}^p}{\abs{x+\frac{z_n}{R_n}}^{sp}} \dx = o_n(1), \text{ since }  \frac{\abs{z_n}}{R_n} \ra \infty.
\end{align*}
Moreover, using a similar set of arguments as in Step 3, we get $\prescript{}{(\wps)'}{\langle} I_{\mu,0,0}'(h_n), \phi {\rangle}_{\wps} = o_n(1)$ for every $\phi \in \wps$.
Thus $\{h_n\}$ becomes a (PS) sequence of $I_{\mu,0,0}$ at level $\eta - I_{\mu,a,\alpha}(\tilde{u}) - I_{0,0,0}(\hat{v})$. 

\noi \textbf{Step 5:} Now, starting from a (PS) sequence $\{ \tilde{u}_n \}$ of $I_{\mu,0,\alpha}$ we we have extracted further (PS) sequences at a level which is strictly lower than the level of $\{ \tilde{u}_n \}$, and with a fixed amount of decrease in every step, since
\begin{align*}
    I_{\mu,0,\alpha}(\hat{u}) \ge \frac{s}{d} S_{\mu}^{\frac{d}{sp}} \text{ and } I_{0,0,0}(\hat{v}) \ge \frac{s}{d} S^{\frac{d}{sp}}.
\end{align*}
Since we have $\sup_n\|\tilde u_n\|_{\wps}$ is finite, there exist $n_1,n_2 \in \N$ such that this process terminates after the $n_1+n_2$ number of steps and the last (PS) sequence strongly converges to $0$. Let $\tilde{u}_1$ and $\tilde{u}_2$ be two non-zero weak limits appearing from two different (PS) sequences of distinct levels. Then in the same spirit of \cite{Tintarev} (Page 130, Theorem~3.3) and using \cite[Lemma 2.6]{NS2025}, we get 
\begin{align*}
    &\mathcal{A}(C_{x^1_n,R^1_n} \tilde{u}_1,C_{x^2_n,R^2_n} \tilde{u}_2) = \mathcal{A}\left(\tilde{u}_1, C_{\frac{x^2_n-x^1_n}{R_n^1}, \frac{R_n^2}{R_n^1}} \tilde{u}_2 \right) \ra 0,\mbox{ as }n\to\infty, \text{ and } \\
    & \mathcal{A}(C_{r^1_n} \tilde{u}_1,C_{r^2_n} \tilde{u}_2) = \mathcal{A}\left(\tilde{u}_1, C_{\frac{r_n^2}{r_n^1}} \tilde{u}_2 \right) \ra 0,\mbox{ as }n\to\infty
\end{align*}
Hence, in view of Proposition \ref{weak-bub} and Proposition \ref{weak-bub_1}, we get
\begin{align*}
    \bigg|\log\left(\frac{R^1_n}{R_n^2}\right)\bigg|+\bigg|\frac{x_n^1-x_n^2}{R_n^1}\bigg|\to\infty \text{ and } \bigg|\log\left(\frac{r^1_n}{r_n^2}\right)\bigg| \to\infty,\text{ as }n\to\infty.
\end{align*}
This completes the proof. \qed

As an application of Theorem \ref{PS-decomposition}, we have the following remarks. 

\begin{remark}
  Let $\{u_n\}$ be a (PS) sequence for $I_{\mu,a, \alpha}$ with $\norm{(u_n)^-}_{L^{p^*_s}(\Omega)} \ra 0$ as $n \ra \infty$. Then Theorem \ref{PS-decomposition} holds with $u \ge 0$ a.e. in $\Omega$, $\tilde{u}_i \ge 0$ and $\tilde{U}_j \ge 0$ a.e. in $\rd$. 
  \end{remark}

\begin{remark}[Constrained minimization problem]
Consider the Nehari manifold associated with \eqref{MainEq}, $\mathcal{N}:= \left\{ u \in \wos : \prescript{}{(\wps)'}{\langle} I'_{\mu, a, \alpha}(u),u{\rangle}_{\wps} = 0 \right\}$. Suppose
\begin{align}\label{lower-level}
    l:= \inf_{u \in \mathcal{N}} < \min \left\{ \frac{s}{d} S^{\frac{d}{sp}}, \frac{s}{d} S_{\mu}^{\frac{d}{sp}} \right\}. 
\end{align}
By the Ekeland variational principle, the functional $I_{\mu, a, \alpha}$ restricted to $\mathcal{N}$ has a (PS) sequence at level $l$, and in view of \eqref{lower-level}, Theorem \ref{PS-decomposition} infers that $\{ u_n \}$ contains a subsequence which converges to a minimizer of $l$, and this minimizer weakly solves \eqref{MainEq}.  
\end{remark}

\noi \textbf{Acknowledgments:}
The author acknowledges the support of the National Board for Higher Mathematics (NBHM) Postdoctoral Fellowship (0204/16(9)/2024/RD-II/6761). The author thanks Souptik Chakraborty for the valuable discussion.

\noi \textbf{Declaration:} 
The author declares that no data were generated or analysed during the current study. As such, there is no data availability associated with this manuscript.

\bibliographystyle{abbrvnat}

\end{document}